\documentclass[a4paper,12pt,reqno]{amsart}

\usepackage{amsmath}
\usepackage{amssymb}
\usepackage{amsfonts}
\usepackage{graphicx}
\usepackage[colorlinks=true, pdfstartview=FitV, linkcolor=blue,
citecolor=blue, urlcolor=blue]{hyperref}
\renewcommand\eqref[1]{(\ref{#1})} 

\usepackage{times}
\usepackage{microtype}
\usepackage{cite}


\usepackage[margin=1.2in]{geometry}

\graphicspath{ {images/} }


\newtheoremstyle{theorem}
{10pt}          
{10pt}  
{\sl}  
{\parindent}     
{\bf}  
{. }    
{ }    
{}     
\theoremstyle{theorem}

\numberwithin{equation}{section}
\theoremstyle{plain}
\newtheorem{thm}{Theorem}[section]

\newtheorem{lem}[thm]{Lemma}

\newtheorem{remark}[thm]{Remark}




\newcommand{\R}{\mathbb R}


\DeclareMathOperator{\dist}{dist}

\DeclareMathOperator*{\essinf}{ess\,inf}
\DeclareMathOperator*{\esssup}{ess\,sup}

\DeclareMathOperator{\Div}{div}
\newcommand{\grad}{\nabla}




\DeclareMathOperator{\op}{op}


\newcommand{\pp}{{p(\cdot)}}
\newcommand{\cpp}{{p'(\cdot)}}
\newcommand{\Lp}{L^{p(\cdot)}}
\newcommand{\Pp}{\mathcal P}

\newcommand{\qq}{{q(\cdot)}}
\newcommand{\rr}{{r(\cdot)}}
\newcommand{\sst}{{s(\cdot)}}

\newcommand{\Lq}{L^{q(\cdot)}}

\newcommand{\ps}{{p^*(\cdot)}}

\def\Xint#1{\mathchoice
{\XXint\displaystyle\textstyle{#1}}%
{\XXint\textstyle\scriptstyle{#1}}%
{\XXint\scriptstyle\scriptscriptstyle{#1}}%
{\XXint\scriptscriptstyle\scriptscriptstyle{#1}}%
\!\int}
\def\XXint#1#2#3{{\setbox0=\hbox{$#1{#2#3}{\int}$ }
\vcenter{\hbox{$#2#3$ }}\kern-.6\wd0}}

\def\dashint{\Xint-}
\def\avgint{\Xint-}


\title[Hardy-Leray inequalities in variable Lebesgue spaces]
{Hardy-Leray inequalities \\ in variable Lebesgue spaces}

\author[D. Cruz-Uribe]{David Cruz-Uribe, OFS}
\address{
	David Cruz-Uribe, OFS:
	\endgraf
	Department of Mathematics
	\endgraf
	University of Alabama
	\endgraf
	Tuscaloosa, AL 35487
	\endgraf
	USA
	\endgraf
	{\it E-mail address} {\rm dcruzuribe@ua.edu}
}

\author[D. Suragan]{Durvudkhan Suragan}
\address{
	Durvudkhan Suragan:
	\endgraf
	Department of Mathematics
	\endgraf
 Nazarbayev University
	\endgraf
	53 Kabanbay Batyr Ave, Astana 010000
	\endgraf
	Kazakhstan
	\endgraf
	{\it E-mail address} {\rm durvudkhan.suragan@nu.edu.kz}}

\subjclass[2010]{26D10, 35A23, 39B62, 42B35}
\keywords{Hardy-Leray inequality, Rellich inequality, Hardy-Sobolev inequality,
  Gagliardo-Nirenberg inequality, Stein-Weiss inequality, variable
  Lebesgue spaces}

\thanks{The authors were partially supported by a Simons
  Foundation Travel Support for Mathematicians Grant.  This research was funded by Nazarbayev University under Collaborative Research Program Grant 20122022CRP1601. No new data
  was collected or generated during the course of this research.  This
  project was begun when the authors met at the Department of
  Mathematics at the University of Alabama in the Fall of 2022.  The
  second author
  would like to thank the university for their support and
  hospitality. }

\date{March 23, 2023}
\begin{document}

\begin{abstract}
  In this paper, we prove the Hardy-Leray inequality and related
  inequalities in variable Lebesgue spaces.  Our proof is based on a
  version of the Stein-Weiss inequality in variable Lebesgue spaces
  derived from two weight inequalities due to Melchiori and Pradolini. We also discuss an application of our results to establish an existence result for the degenerate $\pp$-Laplace operator. 
\end{abstract}

\maketitle

\section{Introduction and main results}
In this paper we consider the problem of extending the classical
Hardy-Leray inequality and related inequalities to the scale of
variable Lebesgue spaces.  The Hardy-Leray inequality, sometimes
referred to simply as the Hardy inequality, states that        
for $1<p<\infty$, $p\neq n$, and compactly supported smooth functions $f$,
\begin{equation}\label{Hardy-Leray}
  \bigg\|\frac{f}{|\cdot|}\bigg\|_{L^{p}(\mathbb{R}^{n})}\
  \leq 
        C\|\nabla f\|_{L^{p}(\mathbb{R}^{n})}.
 \end{equation}
 Originally, this was proved by Hardy~\cite{zbMATH02591430} in one dimension
 when $p=2$, and extended to higher dimensions by
 Leray~\cite{MR1555394}.  This is a special case of Caffarelli, Kohn,
 and Nirenberg~\cite{MR768824}, who proved the following general
 result.  Given $p,\,q\geq 1$ and exponents $a,\,b$ such that
 \[ 0 \leq b-a \leq 1, \quad \frac{-n}{p}<b, \quad \frac{-n}{q} <
   a, \quad \text{and} \quad
   \frac{1}{p}-\frac{1}{q} = \frac{1}{n} + \frac{a-b}{n}, \]
 then
 \[ \||\cdot|^a f \|_{L^{q}(\mathbb{R}^{n})}
   \leq
   C\||\cdot|^b \grad f\|_{L^{p}(\mathbb{R}^{n})}. \]
 Note that when $a=b=0$ this reduces to the classical Hardy-Sobolev
 inequality~\cite[Theorem~7.10]{MR1814364}. 

 Similar results hold for fractional powers of the Laplacian.
Let $\mathcal{S}$ be  the Schwartz space  of rapidly
decaying smooth functions in $\mathbb{R}^{n}$. 
The fractional Laplacian of a function can be defined by the following nonlocal operator in $\mathbb{R}^{n}$ for all $f\in\mathcal{S}$,
    \begin{equation}\label{fraclap}
    (-\Delta)^{s}f(x):=\frac{C(n,s)}{2}\int_{\mathbb{R}^{n}}\frac{2f(x) - f(x + y) - f(x -y)}{|y|^{n+2s}}dy, \;  0<s<1,
\end{equation}
where $C(n,s) > 0$ is a so-called normalization constant.
It  is known that (see e.g. \cite{MR2944369}) if $$(-\Delta)^{s}:\mathcal{S}\rightarrow L^{2}(\mathbb{R}^{n})$$ is the
fractional Laplacian given by \eqref{fraclap}. Then for any $f\in\mathcal{S}$, 
\begin{equation*}
     (-\Delta)^{s}f(x)=\mathcal{F}^{-1}\left(|\xi|^{2s}\mathcal{F}{f}\right)(x),\;\;\;x\in\mathbb{R}^{n},
\end{equation*}
where $\mathcal{F}$ is the Fourier transform of a function.

 Fix
$0<s\leq 1$; then for $1\leq p<\infty$ we have the fractional
Hardy-Rellich inequality
\[ \||\cdot|^{2s} f\|_{L^{p}(\mathbb{R}^{n})}
  \leq
  C\|(-\Delta)^s f \|_{L^{p}(\mathbb{R}^{n})}. \]
This is a special case of the fractional Hardy-Sobolev inequality:
given $p,\,q\geq 1$ and exponents $a,\,b$ such that
 \[ 0 \leq b-a \leq 2s, \quad \frac{-n}{p}<b, \quad \frac{-n}{q} <
   a, \quad \text{and} \quad
   \frac{1}{p}-\frac{1}{q} = \frac{2s}{n} + \frac{a-b}{n}, \]
 then
 \[ \||\cdot|^a f \|_{L^{q}(\mathbb{R}^{n})}
   \leq
   C\||\cdot|^b (-\Delta)^s f\|_{L^{p}(\mathbb{R}^{n})}. \]
 For these and related results,
 see~\cite{MR2944369,MR436854,MR3626031,deNitti-Djitte,MR2469027}. 

 \medskip
 
The variable Lebesgue spaces $\Lp(\Omega)$  are a generalization of the classical
Lebesgue spaces gotten by replacing the constant exponent $p$ by an
exponent function $\pp : \Omega \rightarrow [1,\infty)$.  Intuitively,
they consist of all the measurable functions $f$ that
satisfy
\[ \int_\Omega |f(x)|^{p(x)}\,dx < \infty.  \]
These functions form a Banach function space when equipped with the Luxemburg
norm
\[ \|f\|_\pp = \|f\|_{\Lp(\Omega)} = \inf\bigg\{ \lambda > 0\,:\,
  \int_\Omega \bigg(\frac{|f(x)|}{\lambda}\bigg)^{p(x)}\,dx \leq 1
  \bigg\}. \]
These spaces have been studied for many years; one of the
original motivations was the study PDEs and the calculus of
variations over these
function spaces.  For more details,
see~\cite{CUF13,diening-harjulehto-hasto-ruzicka2010, MR3379920} and
the references they contain.

Because of this, there has been considerable work done in proving
Hardy-type inequalities in the variable Lebesgue spaces.  
A one-dimensional version of
Hardy's inequality was first obtained by Kokilashvili and Samko in
\cite{KokSam04-1D}.  The one-dimensional case was also considered in the papers
\cite{DieSam07-ext1D} and \cite{MCMO07}.  In higher dimensions, for a
bounded domain $\Omega$, 
in \cite{HHK05} the authors proved that 

$$
\left\|\frac{f}{\delta(\cdot)}\right\|_{\Lp(\Omega)} \leq C\left\|\nabla 
f\right\|_{\Lp(\Omega)}, 
$$
where $\delta(x)=\operatorname{dist}(x, \partial \Omega)$.   On $\R^n$
and on bounded domains, the
Hardy-Sobolev inequality
\[ \|f\|_\qq \leq \|\grad f\|_\pp, \]
where $\frac{1}{p(x)}-\frac{1}{q(x)} = \frac{1}{n}$ for all $x$, has been studied by
a number of authors:  see\cite[Section~6.4]{CUF13} for details and
references.  

In this paper, we prove the Hardy-Leray inequality and related results
on the variable Lebesgue spaces over $\R^n$.   We will state our main
results here, but for brevity
we will defer the definition of our  notation (which is standard)
to the next section.    Our first result is a generalization of
inequality~\eqref{Hardy-Leray}.

\begin{thm}[Hardy-Leray inequality]\label{thm1}
 Given $\pp \in \Pp(\R^n)$, if  $\pp \in LH(\R^n)$ and $1 < p_- \leq p_{+}<n$, then for all $f\in C_0^\infty(\R^n)$, 
    \begin{equation*}
        \left\|\frac{f}{|\cdot|}\right\|_{p(\cdot)}\leq C\|\nabla f\|_{p(\cdot)},
    \end{equation*}
    where $C$ is a positive constant independent of $f$. 
\end{thm}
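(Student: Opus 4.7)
The strategy is to reduce the inequality, via a pointwise bound, to a variable-exponent Stein-Weiss estimate for the Riesz potential of order one
\[ I_1 g(x) = c_n \int_{\R^n} \frac{g(y)}{|x-y|^{n-1}}\,dy, \]
namely
\begin{equation*}
  \left\| |\cdot|^{-1}\, I_1 g \right\|_\pp \leq C\|g\|_\pp.
\end{equation*}
This is precisely the instance of the variable Lebesgue Stein-Weiss inequality advertised in the abstract (derived from the two-weight theory of Melchiori-Pradolini), specialized to equal source and target exponents and to the weight pair $(|\cdot|^{-1},\,1)$.

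First I would recall the standard pointwise representation that for $f \in C_0^\infty(\R^n)$ and any unit vector $\omega \in S^{n-1}$,
\[ f(x) = -\int_0^\infty \omega\cdot\grad f(x+r\omega)\,dr, \]
so that $|f(x)| \leq \int_0^\infty |\grad f(x+r\omega)|\,dr$.  Averaging over $\omega \in S^{n-1}$ and passing to polar coordinates $y = x + r\omega$ by Fubini yields the fundamental bound
\[ |f(x)| \leq c_n \int_{\R^n} \frac{|\grad f(y)|}{|x-y|^{n-1}}\,dy = C\, I_1(|\grad f|)(x). \]
This is where the constraint $p_+ < n$ will do the heavy lifting, since in the classical Stein-Weiss inequality it corresponds to $\alpha = 1 < n/p$ with the weight $|x|^{-1}$.

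Granting the variable Stein-Weiss estimate, the theorem follows in one line.  By the monotonicity of the Luxemburg norm applied to the pointwise bound and then the Stein-Weiss inequality with $g = |\grad f|$,
\[ \left\|\frac{f}{|\cdot|}\right\|_\pp \leq C\left\| |\cdot|^{-1} I_1(|\grad f|) \right\|_\pp \leq C \|\grad f\|_\pp. \]

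The main obstacle is therefore the Stein-Weiss step, not the reduction.  Concretely, one must verify that under the hypotheses $\pp \in LH(\R^n)$ and $1 < p_- \leq p_+ < n$ the weight pair $(|\cdot|^{-1},\,1)$ satisfies the variable-exponent two-weight testing condition of Melchiori-Pradolini for $I_1$.  The log-Hölder regularity is the standard assumption delivering boundedness of Riesz potentials on $\Lp$, while $p_- > 1$ keeps the argument off the $L^1$ endpoint and $p_+ < n$ guarantees that $|x|^{-1}$ is an admissible weight against $I_1$ in the required $A$-type class.  Once that verification is in place, the rest of the proof is purely formal.
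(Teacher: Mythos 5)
Your reduction is exactly the paper's: you bound $|f(x)|\lesssim I_1(|\grad f|)(x)$ and then invoke a weighted Stein--Weiss estimate in $\Lp$ with $\alpha=1$, equal exponents, and the weight pair $(|\cdot|^{-1},1)$ (that is, $a=-1$, $b=0$ in the paper's notation); this is precisely how Theorem~\ref{thm1} is deduced from Theorem~\ref{Hardy-Sob}, and your derivation of the pointwise bound is fine. The genuine gap is that the estimate $\||\cdot|^{-1}I_1 g\|_\pp\le C\|g\|_\pp$ is not a citable background fact: it is the paper's own main new tool (Theorem~\ref{thm:var-stein-weiss}), and your proposal stops exactly where its proof begins. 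Saying that one ``must verify'' the Melchiori--Pradolini testing condition, supported by heuristics about log-H\"older continuity and $p_+<n$, does not establish it; in particular, the unweighted $\Lp$-boundedness of $I_1$ is not what is needed, and $|x|^{-1}$ belongs to no relevant class ``automatically''---its admissibility has to be checked against the specific two-weight condition \eqref{eqn:mp1}.

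To close the gap you would need the verification that constitutes the technical heart of the paper. With $(u,v)=(|\cdot|^{-1},1)$ the factor involving $v^{-1}$ in \eqref{eqn:mp1} is trivial, and using $\pp\in LH(\R^n)$ together with the estimate $\|\chi_Q\|_\pp\approx|Q|^{1/p_Q}$ from \eqref{eqn:h-mean}, the condition reduces to showing that $\sup_Q |Q|^{\frac{1}{n}+\frac{1}{q_Q}-\frac{1}{p_Q}}\big(\dashint_Q |x|^{-rq_+}\,dx\big)^{1/(rq_+)}<\infty$ for some $r>1$. One first chooses $r>1$ with $rq_+=rp_+<n$, which is possible exactly because $p_+<n$ (this, rather than boundedness of the Riesz potential, is where that hypothesis does its work). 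One then estimates $\dashint_Q|x|^{\gamma}\,dx$ by $\ell(Q)^{\gamma}$ when $\dist(Q,0)\le \ell(Q)$ and by $|c_Q|^{\gamma}$ when $\dist(Q,0)\ge \ell(Q)$ (Lemma~\ref{lemma1}), and observes that the resulting exponent $\frac{1}{n}+\frac{1}{q_Q}-\frac{1}{p_Q}+\frac{a-b}{n}$ vanishes (trivially here, since $\pp=\qq$ and $a-b=-1$), so the supremum is finite and Theorem~\ref{thm:mp} applies. Without this computation, what you have written is an accurate outline of the paper's strategy rather than a proof.
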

	
\begin{remark}
  By using the theory of variable exponent Sobolev spaces, these
  results can be extended to functions $f$ whose (weak) gradient
  $\nabla f \in \Lp(\R^n)$.   Here and below we leave these extensions
  to the interested reader.
\end{remark}

Theorem~\ref{thm1} is a special case of the following
result.

\begin{thm}[Hardy-Sobolev inequality]\label{Hardy-Sob}
Fix exponents $\pp,\, \qq \in \Pp(\R^n)$ such that $\pp,\, \qq \in LH(\R^n)$,
$1<p_-\leq p_+<\infty$, $1<q_-\leq q_+<\infty$, and $p(x)\leq q(x)$
for all $x\in \R^n$.  Given constants
$a,\,b$ that satisfy
\[
-\frac{n}{q_{+}} < a\leq  b<\frac{n}{(p_{-})^{\prime}},
\]
suppose further that  $\frac{1}{p(x)}-\frac{1}{q(x)}=\frac{1}{n}+\frac{a-b}{n}$.
Then for all  $f\in C_0^\infty(\R^n)$,
\begin{equation}\label{hardysobolev}
  \left\||\cdot|^{a}f\right\|_{q(\cdot)}
  \leq C\||\cdot|^{b}\nabla f\|_{p(\cdot)},
\end{equation}
where $C$ is a positive constant independent of $f$.
\end{thm}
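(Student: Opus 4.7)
The plan is to reduce Theorem~\ref{Hardy-Sob} to a weighted Riesz potential estimate and then invoke the variable-exponent Stein-Weiss inequality advertised in the abstract (which the authors derive from the Melchiori-Pradolini two-weight results). The first ingredient is the classical pointwise representation: for any $f\in C_0^\infty(\R^n)$,
\[
|f(x)| \;\leq\; \frac{1}{n\omega_{n-1}}\int_{\R^n}\frac{|\nabla f(y)|}{|x-y|^{n-1}}\,dy \;=\; c_n\, I_1(|\nabla f|)(x),
\]
where $I_1$ is the Riesz potential of order $1$. Thus the left-hand side of \eqref{hardysobolev} is controlled by $\bigl\||\cdot|^a\, I_1(|\nabla f|)\bigr\|_{\qq}$, and the theorem reduces to a weighted mapping property of $I_1$ between variable Lebesgue spaces.

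The second step is to apply the variable-exponent Stein-Weiss estimate
\[
\bigl\||\cdot|^{a}\,I_1 g\bigr\|_{\qq} \;\leq\; C\,\bigl\||\cdot|^{b}\,g\bigr\|_{\pp}
\]
with $g = |\nabla f|$. The hypotheses of Theorem~\ref{Hardy-Sob} are tailored precisely for this: in the usual Stein-Weiss framework with $\alpha=1$, $\beta_1=-a$, $\beta_2=b$, the restrictions $-n/q_{+}<a$ and $b<n/(p_{-})'$ are the variable-exponent counterparts of $\beta_1<n/q$ and $\beta_2<n/p'$; the inequality $a\leq b$ corresponds to $\beta_1+\beta_2\geq 0$; and the scaling identity $1/p(x)-1/q(x) = 1/n + (a-b)/n$ is exactly the Sobolev-type relation $(\alpha-\beta_1-\beta_2)/n$ obtained by matching the homogeneities of the two sides. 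Log-H\"older regularity of $\pp$ and $\qq$ is what allows passage from the fixed-exponent bound for $I_1$ to the variable-exponent setting and lets the power weights $|x|^{a},|x|^{b}$ behave as if they were constant on dyadic annuli.

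The main obstacle is establishing the weighted variable-exponent Stein-Weiss estimate itself: the Melchiori-Pradolini two-weight criterion must be specialized to the power-weight pair $(|x|^{a},|x|^{b})$ and verified for the pair $(\pp,\qq)$. The sharp thresholds $-n/q_{+}$ and $n/(p_{-})'$ arise naturally in this verification: splitting the relevant averages over $|x|\leq r$ and $|x|>r$, the convergence of the critical radial integrals $\int_0^{\infty} r^{a\,q(\cdot)+n-1}\,dr$ and $\int_0^{\infty} r^{-b\,p'(\cdot)+n-1}\,dr$ on each half-line forces precisely these bounds (after replacing the variable exponents by their extremes $q_{+}$ and $(p_{-})'$ where each integral is critical). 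Once the weighted Stein-Weiss inequality is in hand, the conclusion of Theorem~\ref{Hardy-Sob} follows in one line by combining it with the pointwise bound $|f|\leq c_n I_1(|\nabla f|)$, and Theorem~\ref{thm1} is recovered by taking $a=b=0$ and $\pp=\qq$.
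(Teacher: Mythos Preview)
Your proposal is correct and follows essentially the same route as the paper: reduce via the pointwise bound $|f|\lesssim I_1(|\nabla f|)$ and then apply the variable-exponent Stein-Weiss inequality (Theorem~\ref{thm:var-stein-weiss}) with $\alpha=1$, which the paper establishes by verifying the Melchiori--Pradolini two-weight condition for the power-weight pair. One small slip in your closing remark: Theorem~\ref{thm1} is recovered by taking $a=-1$, $b=0$, $\pp=\qq$, not $a=b=0$ (the latter would force $0=1/n$ in the scaling identity).
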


\begin{remark}
  If we set $\pp=\qq$, $b=0$ and $a=-1$ in
  Theorem~\ref{Hardy-Sob}, we immediately get Theorem~\ref{thm1}.  Note that these inequalities were studied in \cite{RS21} in generalized grand Lebesgue spaces. 
\end{remark}

\begin{remark}
  When $a=b=0$, Theorem~\ref{Hardy-Sob} becomes the version of the
  Hardy-Sobolev inequality in variable Lebesgue spaces proved
  in~\cite[Section~6.4]{CUF13}.  However, there this theorem included 
  the case $p_-=1$; the proof depended on weighted norm inequalities
  in the constant exponent case, the theory of Rubio de Francia extrapolation
  and a technique due to Maz'ya.  It is an open question whether
  Theorem~\ref{Hardy-Sob} can be extended to include the case $p_-=1$. 
\end{remark}

As a consequence of Theorem~\ref{Hardy-Sob}, or more precisely, by
adapting its proof, we prove a weighted
Gagliardo-Nirenberg inequality in the variable Lebesgue spaces.  In the constant exponent case, this result was studied in  \cite{RSY17,RSY18}.  For
related results without weights, see~\cite[Section~6.5.10]{CUF13}.

\begin{thm} \label{thm:wGN}
Fix an exponent $\pp \in \Pp(\R^n)$ such that 
$1<p_-\leq p_+<n$ and $\pp\in LH(\R^n)$.  Define $\ps \in \Pp(\R^n)$ by
\[ \frac{1}{p(x)}-\frac{1}{p^*(x)} = \frac{1}{n}. \]
Fix a constant $a$ such that
\[ -\frac{n}{(p^*)_+} < a < \frac{n}{(p_-)'}. \]
  Let $\qq \in \Pp(\R^n)$, fix $\theta \in [0,1]$, and define $\rr\in
  \Pp(\R^n)$ by
  \[ \frac{1}{r(x)} = \frac{\theta}{p^*(x)}+\frac{1-\theta}{q(x)}. \]
  Then for all $f\in C_0^\infty(\R^n)$, 
\begin{equation}\label{GNi}
  \left\||\cdot|^a f\right\|_{\rr}
  \leq
  C\||\cdot|^a\nabla f\|^{\theta}_{\pp}\;\||\cdot|^a f\|^{1-\theta}_{\qq},
\end{equation}
where $C$ is a positive constant independent of $f$.
\end{thm}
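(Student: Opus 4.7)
The plan is to reduce \eqref{GNi} to Theorem~\ref{Hardy-Sob} by interpolating $\||\cdot|^a f\|_{\rr}$ between $\||\cdot|^a f\|_{\ps}$ and $\||\cdot|^a f\|_{\qq}$ via H\"older's inequality in variable Lebesgue spaces. If $\theta=0$ there is nothing to prove, and if $\theta=1$ the conclusion is exactly Theorem~\ref{Hardy-Sob} applied with $\qq=\ps$ and $b=a$. Henceforth assume $\theta\in(0,1)$.

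First, I would write $|x|^a f(x) = (|x|^a f(x))^{\theta}\cdot (|x|^a f(x))^{1-\theta}$ and apply the variable-exponent H\"older inequality with exponents $\sst=\ps/\theta$ and $\ttt=\qq/(1-\theta)$. These are valid exponents in $\Pp(\R^n)$ since $\theta\in(0,1)$ and $p^*,q\geq 1$, and they satisfy
\[
\frac{1}{s(x)}+\frac{1}{t(x)} = \frac{\theta}{p^*(x)}+\frac{1-\theta}{q(x)} = \frac{1}{r(x)}.
\]
Combining this with the elementary scaling identity $\|g^{\alpha}\|_{\pp/\alpha} = \|g\|_{\pp}^{\alpha}$ (which follows from a substitution in the Luxemburg modular), I obtain
\[
\left\||\cdot|^a f\right\|_{\rr}
\leq K\, \left\||\cdot|^a f\right\|_{\ps}^{\theta}\, \left\||\cdot|^a f\right\|_{\qq}^{1-\theta}
\]
for some constant $K$ depending only on the exponents.

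Next, I would apply Theorem~\ref{Hardy-Sob} with $\qq$ replaced by $\ps$ and $b=a$ to obtain $\||\cdot|^a f\|_{\ps}\leq C\||\cdot|^a\nabla f\|_{\pp}$. The hypotheses of Theorem~\ref{Hardy-Sob} are satisfied because $p(x)\leq p^*(x)$ (as $1/p(x)-1/p^*(x)=1/n > 0$); $1<(p^*)_-\leq (p^*)_+<\infty$ follows from $1 < p_- \leq p_+ < n$; and $\ps\in LH(\R^n)$ since $1/p^*(\cdot)$ differs from $1/p(\cdot)$ by the constant $1/n$, which preserves log-H\"older continuity. The range restriction in the present theorem becomes, with $b=a$, exactly $-n/q_+ < a \leq b < n/(p_-)'$ as required, while the dimensional relation $1/p(x)-1/q(x) = 1/n + (a-b)/n$ reduces trivially to $1/p(x)-1/p^*(x) = 1/n$. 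Substituting the Hardy-Sobolev estimate into the H\"older bound yields \eqref{GNi}.

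The main obstacle, such as it is, is simply the bookkeeping verification that $\ps$ inherits the regularity and bounded-exponent conditions required to invoke Theorem~\ref{Hardy-Sob}; no new harmonic-analytic input is needed beyond Theorem~\ref{Hardy-Sob} and the generalized H\"older inequality in variable Lebesgue spaces.
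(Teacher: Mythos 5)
Your proposal is correct and follows essentially the same route as the paper: interpolate $\||\cdot|^a f\|_{\rr}$ via the generalized H\"older inequality and the rescaling identity, then bound $\||\cdot|^a f\|_{\ps}$ by $\||\cdot|^a\nabla f\|_{\pp}$ — the paper does this last step by citing \eqref{eqn:rp1} and Theorem~\ref{thm:var-stein-weiss} with $\alpha=1$, $a=b$, $\qq=\ps$, which is exactly the specialization of Theorem~\ref{Hardy-Sob} you invoke. Your verification that $\ps$ inherits the $LH$ and boundedness hypotheses is the same bookkeeping the paper leaves implicit.
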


We can also prove Poincar\'e-type inequalities analogous to the inequalities in Theorem~\ref{hardysobolev}.  Given a set $\Omega$ such that $0<|\Omega|<\infty$ and a locally integrable function $f$, define
\[ \langle f\rangle_\Omega 
= \frac{1}{|\Omega|}\int_\Omega f(x)\,dx. \]

\begin{thm}[Poincar\'e inequality]\label{thm:poincare}
Let $\Omega$ be a bounded, convex set.  Fix exponents $\pp,\, \qq \in \Pp(\Omega)$ such that $\pp,\, \qq \in LH(\Omega)$,
$1<p_-\leq p_+<\infty$, $1<q_-\leq q_+<\infty$, and $p(x)\leq q(x)$
for all $x\in \R^n$.  Given constants
$a,\,b$ that satisfy
\[
-\frac{n}{q_{+}} < a\leq  b<\frac{n}{(p_{-})^{\prime}},
\]
suppose further that  $\frac{1}{p(x)}-\frac{1}{q(x)} \leq \frac{1}{n}+\frac{a-b}{n}$.
Then for all  $f\in C^\infty(\Omega)$,
\begin{equation}\label{eqn:poincare1}
  \left\||\cdot|^{a}[f-\langle f\rangle_\Omega ]\right\|_{L^{q(\cdot)}(\Omega)}
  \leq C\||\cdot|^{b}\nabla f\|_{L^{p(\cdot)}(\Omega)}
\end{equation}
where $C$ is a positive constant independent of $f$.
\end{thm}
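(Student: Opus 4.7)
The plan is to mimic the proof of Theorem~\ref{Hardy-Sob}, replacing the standard pointwise domination $|f(x)| \lesssim I_1(|\nabla f|)(x)$ on $\R^n$ by the analogous representation for $f - \langle f\rangle_\Omega$ available on a bounded convex domain.

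\smallskip

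\emph{Step 1 (Pointwise representation).} For $f \in C^\infty(\Omega)$, the classical convex-domain Poincaré lemma (e.g. Gilbarg--Trudinger, Lemma~7.16) yields
\[
|f(x) - \langle f\rangle_\Omega| \;\leq\; \frac{(\diam \Omega)^{n}}{n\,|\Omega|}\int_\Omega \frac{|\nabla f(y)|}{|x-y|^{n-1}}\,dy \;=\; C(n,\Omega)\, I_1\bigl(\chi_\Omega|\nabla f|\bigr)(x),
\qquad x \in \Omega,
\]
where $I_1$ is the classical Riesz potential of order one. This is the convex-domain analogue of the bound $|f(x)|\leq C I_1(|\nabla f|)(x)$ that drives the proof of Theorem~\ref{Hardy-Sob}.

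\smallskip

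\emph{Step 2 (Weighted Stein--Weiss).} The proof of Theorem~\ref{Hardy-Sob} proceeds by establishing the weighted variable-exponent estimate
\[
\bigl\||\cdot|^{a} I_1 g\bigr\|_{\qq} \;\leq\; C\,\bigl\||\cdot|^{b} g\bigr\|_{\pp}
\]
for nonnegative measurable $g$ on $\R^n$, as a consequence of the Melchiori--Pradolini two-weight inequality in variable Lebesgue spaces. Applying this to $g = \chi_\Omega|\nabla f|$ and combining with Step~1 gives \eqref{eqn:poincare1} immediately, provided the \emph{equality} $\frac{1}{p(x)}-\frac{1}{q(x)} = \frac{1}{n}+\frac{a-b}{n}$ holds.

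\smallskip

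\emph{Step 3 (Inequality case).} To handle the condition $\frac{1}{p(x)}-\frac{1}{q(x)}\leq\frac{1}{n}+\frac{a-b}{n}$, define $\tilde q \in \Pp(\Omega)$ on $\Omega$ by the corresponding equality and extend it to $\R^n$ preserving $LH$ (e.g.\ as a constant outside a ball containing $\Omega$). Then $\tilde q(x)\geq q(x)$ on $\Omega$, so the boundedness of $\Omega$ gives the continuous embedding
\[
\bigl\||\cdot|^{a}(f-\langle f\rangle_\Omega)\bigr\|_{L^{q(\cdot)}(\Omega)} \;\leq\; C\bigl\||\cdot|^{a}(f-\langle f\rangle_\Omega)\bigr\|_{L^{\tilde q(\cdot)}(\Omega)},
\]
and we apply Step~2 with exponent $\tilde q$ to the right-hand side.

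\smallskip

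The main technical obstacle will be verifying that the auxiliary exponent $\tilde q$ constructed in Step~3 still meets the hypotheses needed for Step~2. The log-Hölder continuity transfers easily since $1/\tilde q - 1/p$ is constant; the delicate point is the range condition $a>-n/\tilde q_+$, which is genuinely stronger than the stated $a>-n/q_+$ because $\tilde q_+\geq q_+$. The strict inequalities $-n/q_+ < a \leq b < n/(p_-)'$ provide the slack needed to choose $\tilde q$ (equivalently, to slightly increase $b$ or decrease the gap $b-a$ within the admissible range) so that all hypotheses of Theorem~\ref{Hardy-Sob} are met; this perturbation argument is routine but is the piece requiring the most care.
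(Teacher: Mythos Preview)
Your argument is essentially the paper's: it too introduces an auxiliary exponent $\rr$ (your $\tilde q$) defined by the \emph{equality} $\frac{1}{p(x)}-\frac{1}{r(x)}=\frac{1}{n}+\frac{a-b}{n}$, passes from $L^{q(\cdot)}(\Omega)$ to $L^{r(\cdot)}(\Omega)$ via the bounded-domain embedding (written there explicitly through the generalized H\"older inequality with a complementary exponent $s(\cdot)$), and then applies the variable Stein--Weiss theorem with $\alpha=1$ together with the convex-domain pointwise bound~\eqref{eqn:rp1bis}. The concern you raise in your last paragraph---that applying Theorem~\ref{thm:var-stein-weiss} with target exponent $\tilde q$ requires $a>-n/\tilde q_+$, which is formally stronger than the stated hypothesis $a>-n/q_+$---is legitimate, and the paper's proof simply invokes the Stein--Weiss result without commenting on this point.
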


\begin{remark} \label{remark:LH-omega}
The assumption that $\pp,\,\qq \in LH(\Omega)$ can be replaced with the seemingly stronger assumption that $\pp,\,\qq \in LH(\R^n)$, since log-H\"older continuous functions on any set $\Omega$ can be extended to functions in $LH(\R^n)$:  See~\cite[Lemma~2.4]{CUF13}.
\end{remark}

\begin{remark} When $\pp=\qq$ and $a=b=0$, this result was proved in~\cite[Theorem~6.21]{CUF13}, but again, this result included the case $p_-=1$.
\end{remark}

\medskip

Our next results are the analogous theorems for the fractional Laplacian.

\begin{thm}[Fractional Hardy-Rellich
  inequality]\label{frac-Hardy-Leray}
  Fix $s \in [0,1]$.  Given $\pp\in \Pp(\R^n)$, suppose $1<p_-\leq
  p_+<\frac{n}{2s}$ and $\pp \in LH(\R^n)$.   Then for all $f\in C_0^\infty(\R^n)$, 
\begin{equation}\label{hardyin}
\left\||\cdot|^{-2s}f\right\|_{\pp}\leq C\|(-\Delta )^{s}f\|_{\pp},
\end{equation}
where $C$ is a positive constant independent of $f$. 
\end{thm}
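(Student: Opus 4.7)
The plan is to reduce \eqref{hardyin} to a weighted mapping property of the Riesz potential on $\Lp(\R^n)$, in the spirit of the Stein--Weiss inequality. The first step is to invert the fractional Laplacian: for every $f\in C_0^\infty(\R^n)$ and $0<s<1$, one has
\[
f(x) = c_{n,s}\, I_{2s}\!\bigl((-\Delta)^{s} f\bigr)(x)
     = c_{n,s}\int_{\R^n}\frac{(-\Delta)^{s}f(y)}{|x-y|^{n-2s}}\,dy,
\]
since on the Fourier side $(-\Delta)^{s}$ and $I_{2s}$ are multiplication by $|\xi|^{2s}$ and $|\xi|^{-2s}$ respectively, and $(-\Delta)^{s}f$ decays fast enough at infinity to make the right-hand side absolutely convergent. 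The boundary case $s=0$ is trivial, and $s=1$ follows from the classical Newtonian representation for $-\Delta$.

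Writing $g = (-\Delta)^{s}f$ and multiplying both sides by $|x|^{-2s}$, the desired bound reduces to the one-weight Stein--Weiss estimate
\[
\bigl\||\cdot|^{-2s}\, I_{2s} g\bigr\|_{\pp} \le C\,\|g\|_{\pp},
\]
which is a Stein--Weiss inequality in $\Lp$ with output weight $|x|^{-2s}$ and trivial input weight. I would invoke the variable exponent Stein--Weiss inequality that, as advertised in the abstract, the authors derive from the two--weight theorems of Melchiori and Pradolini. Applied with parameters $\alpha=2s$, $a=-2s$, $b=0$, and $\qq=\pp$, the dimensional balance $\tfrac{1}{p(x)}-\tfrac{1}{p(x)} = \tfrac{2s+a-b}{n}$ holds trivially, the order condition $a\le b$ holds with equality, and the necessary pointwise bound $-n/p_+ < a$ becomes $2s\,p_+ < n$, which is exactly the hypothesis $p_+<n/(2s)$. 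The $LH$--continuity of $\pp$ is what allows the underlying two-weight norm inequalities to be promoted to a $\pp$--norm estimate through the extrapolation / modular techniques embedded in the Melchiori--Pradolini framework.

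Chaining the two steps then gives
\[
\bigl\||\cdot|^{-2s} f\bigr\|_{\pp}
= c_{n,s}\bigl\||\cdot|^{-2s} I_{2s}\!\bigl((-\Delta)^{s} f\bigr)\bigr\|_{\pp}
\le C\,\|(-\Delta)^{s}f\|_{\pp},
\]
which is \eqref{hardyin}. The principal obstacle is not the representation formula, which is standard, but the verification of the variable exponent Stein--Weiss inequality: one must check that the two-weight conditions of Melchiori--Pradolini are compatible with the power weights $(|x|^{-2s}, 1)$ on the full range $1<p_-\le p_+<n/(2s)$, and that these can be converted into a genuine norm bound on $\Lp(\R^n)$ under the $LH$ hypothesis. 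Once that Stein--Weiss statement is established (and it is the same core tool that drives Theorems~\ref{thm1}--\ref{thm:poincare}), the fractional Hardy--Rellich inequality follows by a single application to $g=(-\Delta)^{s}f$.
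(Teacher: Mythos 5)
Your proposal is correct and follows essentially the same route as the paper: the paper obtains Theorem~\ref{frac-Hardy-Leray} as the special case of Theorem~\ref{frac-Hardy-Sob} with $\pp=\qq$, $b=0$ and $a=-2s$, which in turn is just the identity \eqref{eqn:rp2}, $f=I_{2s}((-\Delta)^s f)$, combined with the variable-exponent Stein--Weiss inequality (Theorem~\ref{thm:var-stein-weiss}) at $\alpha=2s$, exactly the chain you describe. Your parameter bookkeeping ($a=-2s$, $b=0$, and $-n/p_+<-2s$ equivalent to $p_+<n/(2s)$) is the correct one (the paper's remark ``$a=2s$'' is a sign typo), and your deferral of the power-weight verification to the Stein--Weiss theorem matches how the paper structures the argument.
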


Theorem~\ref{frac-Hardy-Leray} is a special case of the following result.

\begin{thm}[Fractional Hardy-Sobolev inequality]\label{frac-Hardy-Sob}
 Fix $s \in [0,1]$.  Given $\pp, \, \qq\in \Pp(\R^n)$, suppose that
 $\pp,\,\qq \in LH(\R^n)$, $1<p_-\leq p_+<\infty$, $1<q_-\leq
 q_+<\infty$, and $p(x)\leq q(x)$ for all $x\in \R^n$.  Given
 constants $a,\,b$ that satisfy
 \[ - \frac{n}{q_+} <a \leq b < \frac{n}{(p_-)'}, \]
 suppose further that $\frac{1}{p(x)}-\frac{1}{q(x)}=\frac{2s}{n}+
 \frac{a-b}{n}$.  Then for all $f\in C_0^\infty(\R^n)$, 
\begin{equation}\label{frachardysobin}
\left\||\cdot|^{a}f\right\|_{\qq}\leq C\||\cdot|^{b}(-\Delta )^{s} f\|_{\pp},
\end{equation}
where $C$ is a positive constant independent of $f$.
\end{thm}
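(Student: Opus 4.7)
The plan is to reduce Theorem~\ref{frac-Hardy-Sob} to the variable-exponent Stein-Weiss inequality for the Riesz potential
\[
I_{2s}g(x) \;=\; \frac{1}{\gamma(n,s)}\int_{\R^n}\frac{g(y)}{|x-y|^{n-2s}}\,dy,
\]
mirroring the approach the paper uses to obtain the non-fractional Hardy-Sobolev inequality (Theorem~\ref{Hardy-Sob}). The crucial ingredient is the classical identity $f = I_{2s}\bigl((-\Delta)^s f\bigr)$, valid for every Schwartz function and in particular for $f\in C_0^\infty(\R^n)$, which is verified on the Fourier side from $\widehat{(-\Delta)^s f}(\xi)=|\xi|^{2s}\hat f(\xi)$ together with $\widehat{I_{2s}g}(\xi)=|\xi|^{-2s}\hat g(\xi)$. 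Setting $g:=(-\Delta)^s f$, inequality~\eqref{frachardysobin} becomes
\[
\bigl\||\cdot|^{a}I_{2s}(g)\bigr\|_{\qq} \;\leq\; C\,\bigl\||\cdot|^{b}g\bigr\|_{\pp}.
\]

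I would then invoke the variable Lebesgue space Stein-Weiss inequality of order $2s$, which, as announced in the abstract, is derived earlier in the paper from the two-weight estimates of Melchiori and Pradolini. The hypotheses of Theorem~\ref{frac-Hardy-Sob}---log-H\"older continuity of $\pp$ and $\qq$, $1<p_-\le p_+<\infty$, $1<q_-\le q_+<\infty$, $p(x)\le q(x)$, the boundary conditions $-n/q_+<a\le b<n/(p_-)'$, and the scaling identity $\frac{1}{p(x)}-\frac{1}{q(x)}=\frac{2s}{n}+\frac{a-b}{n}$---match exactly the conditions needed for that Stein-Weiss bound: the scaling identity reflects the homogeneity of $I_{2s}$, while the inequalities on $a$ and $b$ put the power weights $|x|^{a}$ and $|x|^{b}$ into the appropriate $A_{\pp,\qq}$-type class required by the Melchiori--Pradolini criterion. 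Substituting back $g=(-\Delta)^s f$ gives \eqref{frachardysobin}.

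The principal obstacle is to justify the reduction rigorously. For $f\in C_0^\infty(\R^n)$ with $s\in(0,1)$, the function $(-\Delta)^s f$ is smooth on $\R^n$ with the decay $|(-\Delta)^s f(x)|\lesssim(1+|x|)^{-n-2s}$; the condition $b<n/(p_-)'$ controls $|x|^{b}$ near the origin, and the decay together with $a>-n/q_+$ keeps both sides of the reduced Stein-Weiss estimate finite, so that Fubini and Fourier inversion are legitimate and the pointwise identity $f=I_{2s}((-\Delta)^s f)$ is valid. The endpoint $s=0$ is trivial (the constraints force $\pp=\qq$, $a=b$), while $s=1$ reduces to the Newtonian-potential representation of $f$ in terms of $\Delta f$. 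Finally, Theorem~\ref{frac-Hardy-Leray} is recovered from Theorem~\ref{frac-Hardy-Sob} by choosing $\pp=\qq$, $b=0$, and $a=-2s$.
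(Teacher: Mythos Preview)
Your proposal is correct and follows essentially the same route as the paper: write $f=I_{2s}\bigl((-\Delta)^s f\bigr)$ via \eqref{eqn:rp2} and then apply the variable-exponent Stein--Weiss inequality (Theorem~\ref{thm:var-stein-weiss}) with $\alpha=2s$, which is precisely what the paper does. Your additional remarks on the decay of $(-\Delta)^s f$ and the endpoint cases $s=0,1$ go a bit beyond what the paper records, but the core argument is identical.
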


Finally, by adapting the proof of Theorem~\ref{Hardy-Sob}, we prove a weighted
fractional Gagliardo-Nirenberg inequality in the variable Lebesgue spaces. Even in the constant exponent case, this result appears to be new.

\begin{thm}  \label{thm:frac-GNi}
Fix $s\in [0,1]$ and fix an exponent $\pp \in \Pp(\R^n)$ such that
$1<p_-\leq p_+ < \frac{n}{2s}$ and $\pp \in LH(\R^n)$.  Define
$p_s(\cdot)\in \Pp(\R^n)$ by
\[ \frac{1}{p(x)}- \frac{1}{p_s(x)} = \frac{2s}{n}. \]
Fix a constant $a$ such that
\[ -\frac{n}{(p_s)_+} < a < \frac{n}{(p_-)'}.  \]
Let $\qq \in \Pp(\R^n)$, fix $\theta \in [0,1]$, and define $\rr\in
\Pp(\R^n)$ by
\[ \frac{1}{r(x)} = \frac{\theta}{p_s(x)} + \frac{1-\theta}{q(x)}. \]
Then for all $f\in C_0^\infty(\R^n)$, 
\begin{equation}\label{eqn:frac-GNi1}
  \left\||\cdot|^af\right\|_{\rr}
  \leq
  C\|(-\Delta )^{s} f\|^{\theta}_{\pp}\|f\|^{1-\theta}_{\qq},
\end{equation}
where $C$ is a positive constant independent of $f$.
\end{thm}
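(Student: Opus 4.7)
The plan is to imitate the proof of Theorem~\ref{thm:wGN}, substituting $(-\Delta)^{s}$ for the gradient and calling on the fractional Hardy-Sobolev inequality (Theorem~\ref{frac-Hardy-Sob}) in place of its first-order counterpart. The two ingredients are the log-convexity of variable Lebesgue norms, which realises $\rr$ as an interpolant of $p_s(\cdot)$ and $\qq$, and Theorem~\ref{frac-Hardy-Sob} itself, which controls a weighted $L^{p_s(\cdot)}$ norm of $f$ by $\|(-\Delta)^{s}f\|_{\pp}$.

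I would begin with the log-convexity step. Since $\frac{1}{r(x)}=\frac{\theta}{p_s(x)}+\frac{1-\theta}{q(x)}$, standard Luxemburg-norm manipulations in $\Pp(\R^{n})$ (see, e.g., \cite[Cor.~2.28]{CUF13}) give, for any measurable $g$,
\[
\|g\|_{\rr}\leq K\,\|g\|_{p_s(\cdot)}^{\theta}\,\|g\|_{\qq}^{1-\theta},
\]
with $K$ depending only on the exponents. Applying this with $g(x)=|x|^{a}f(x)$ yields
\[
\left\||\cdot|^{a}f\right\|_{\rr}\leq K\,\left\||\cdot|^{a}f\right\|_{p_s(\cdot)}^{\theta}\,\left\||\cdot|^{a}f\right\|_{\qq}^{1-\theta}.
\]

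Next I would control the first interpolation factor using Theorem~\ref{frac-Hardy-Sob}. The identity $\frac{1}{p(x)}-\frac{1}{p_s(x)}=\frac{2s}{n}$ is exactly the exponent relation of Theorem~\ref{frac-Hardy-Sob} with the $\qq$ there set equal to $p_s(\cdot)$ and with the two weight exponents coinciding, and our admissibility hypothesis $-n/(p_s)_+<a<n/(p_-)'$ lies inside the range that Theorem~\ref{frac-Hardy-Sob} permits for this choice. Invoking that theorem gives
\[
\left\||\cdot|^{a}f\right\|_{p_s(\cdot)}\leq C\,\|(-\Delta)^{s}f\|_{\pp}.
\]

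To conclude I would compare $\||\cdot|^{a}f\|_{\qq}$ with $\|f\|_{\qq}$, absorbing the weight $|\cdot|^{a}$ into the constant by the same modular-norm bookkeeping (in the admissible range for $a$) that appears at the end of the proof of Theorem~\ref{frac-Hardy-Sob}. Combining the three steps yields \eqref{eqn:frac-GNi1}. The main obstacle, in my view, is precisely this last reduction: since $\||\cdot|^{a}f\|_{\qq}\leq C\|f\|_{\qq}$ is not a universal fact, it must be justified using the full set of hypotheses on $a$ and $\qq$ together with the interpolation relation tying $\qq$ to $\rr$. The log-convexity step and the application of Theorem~\ref{frac-Hardy-Sob} are otherwise routine once the exponent bookkeeping is set up as above; a secondary but standard point is that the weight $|\cdot|^{a}$ is locally unbounded when $a<0$, which is handled by a cutoff argument near the origin compatible with the log-H\"older condition on $\qq$.
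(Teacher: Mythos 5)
Your first two steps coincide with the paper's own argument: H\"older's inequality \eqref{eqn:holder} together with the rescaling property \eqref{eqn:rescale} gives $\||\cdot|^af\|_\rr\le C\||\cdot|^af\|_{p_s(\cdot)}^\theta\,\||\cdot|^af\|_\qq^{1-\theta}$, and the first factor is then handled via \eqref{eqn:rp2} and Theorem~\ref{thm:var-stein-weiss} with $\alpha=2s$, $a=b$ and $\qq=p_s(\cdot)$ (equivalently, Theorem~\ref{frac-Hardy-Sob} with those choices). But what that theorem produces is $\||\cdot|^af\|_{p_s(\cdot)}\le C\||\cdot|^a(-\Delta)^sf\|_\pp$: the weight persists on the right. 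Your assertion that it yields $\||\cdot|^af\|_{p_s(\cdot)}\le C\|(-\Delta)^sf\|_\pp$ is not an application of Theorem~\ref{frac-Hardy-Sob}, and it is false for $a\neq0$ already for constant exponents: under the dilation $f\mapsto f(\lambda\,\cdot)$ the two sides scale as $\lambda^{-a-n/p_s}$ and $\lambda^{2s-n/p}$, and since $1/p-1/p_s=2s/n$ these exponents agree only when $a=0$.

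The same issue defeats your closing step. The comparison $\||\cdot|^af\|_\qq\le C\|f\|_\qq$, which you correctly single out as the main obstacle, is not merely unproved but false for every $a\neq0$ (translate a fixed bump to distance $R$ from the origin and let $R\to\infty$ when $a>0$; dilate a bump toward the origin when $a<0$); no cutoff near the origin or modular bookkeeping can produce it, and nothing of that kind occurs in the proof of Theorem~\ref{frac-Hardy-Sob}. What the paper's one-line proof actually establishes is the exact analogue of \eqref{GNi}, namely $\||\cdot|^af\|_\rr\le C\||\cdot|^a(-\Delta)^sf\|_\pp^{\theta}\,\||\cdot|^af\|_\qq^{1-\theta}$; the absence of the weights on the right of \eqref{eqn:frac-GNi1} is best read as a misprint, since the scaling computation above shows the unweighted form can only hold when $a=0$. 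So you should either stop at the weighted inequality, exactly as in the proof of Theorem~\ref{thm:wGN}, or restrict the unweighted statement to $a=0$; as written, your steps two and three contain genuine gaps that cannot be repaired for $a\neq 0$.
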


\medskip

The remainder of this paper is organized as follows.  In
Section~\ref{section:prelim} we gather some basic definitions and
results about variable Lebesgue spaces that we will need.  The heart of the proofs
of our main results is a generalization of the Stein-Weiss theorem for
the fractional integral operator to the scale of variable Lebesgue
spaces, which is of independent interest.  We state and prove this result in
Section~\ref{section:stein-weiss}.  In
Section~\ref{section:proofs} we give the proofs of
Theorems~\ref{thm1}--\ref{thm:frac-GNi}.  Finally, in Section~\ref{section:pde-app} we give an application of our results: we show that a Neumann-type problem for the degenerate  operator 
\[ Lu = - \Div(|\sqrt{Q} \grad u|^{\pp-2}Q\grad u) \]
has a solution. This operator has been studied previously in~\cite{MR2670139,MR3585054,MR3974098,MR4332462}.  It is a generalization of the $\pp$-Laplacian
\[ \Delta_\pp u = 
-\Div(|\grad u|^{\pp-2}\grad u), \]
which arises in the calculus of variations as an example of
nonstandard growth conditions, and has been studied by a
number of authors: see~\cite{MR3308513, MR2639204,
  MR2291779, MR3379920} and the references they contain.

Throughout this paper, $n$
will denote the dimension of the underlying space $\R^n$.   By a
constant $C$ we will mean a value that may depend on the underlying
parameters (such as the exponent functions) but not on a particular
function $f$.  Its value may change from line to line.  Sometimes, we
will write $A\lesssim B$ instead of $A\leq cB$.  If $A\lesssim B$ and
$B\lesssim A$, we will write $A\approx B$. 

\section{Properties of variable Lebesgue spaces}
\label{section:prelim}

In this section we give some basic definitions and properties of
variable Lebesgue spaces.  For proofs and further information,
see~\cite[Chapter~2]{CUF13}.

Given a set $\Omega\subset \R^n$, let $\mathcal{P}(\Omega)$ be the set
of all Lebesgue measurable functions, denoted by $p(\cdot)$, such that
$p(\cdot):\; \Omega \rightarrow[1, \infty]$. The functions $p(\cdot)$
are referred to as  exponent functions.  Given
$p(\cdot) \in \mathcal{P}(\Omega)$ and a set $E \subset \Omega$, define
\[ p_{-}(E)=\essinf_{x\in E} p(x), 
  \quad
  p_{+}(E)=\esssup_{x\in E} p(x). \]
If the domain is clear we will simply write
$p_{-}=p_{-}(\Omega), p_{+}=p_{+}(\Omega).$

Given $p(\cdot)\in \Pp(\R^n)$, define the conjugate exponent
$p^{\prime}(\cdot)$ pointwise by
$\frac{1}{p^{\prime}(x)}+\frac{1}{p(x)}=1$, with the conventions that
$1/\infty=0$ and $1/0=\infty$.   The following identities
follow at once from the definition:
\[ \left(p^{\prime}(\cdot)\right)_{+}=\left(p_{-}\right)^{\prime},
  \quad
  \left(p^{\prime}(\cdot)\right)_{-}=\left(p_{+}\right)^{\prime}. \]
	
Given $\pp \in \Pp(\Omega)$, we say that it belongs to the class of
locally log-H\"older continuous exponents $L H_0(\Omega) $, denoted by
$p(\cdot) \in L H_0(\Omega) $, if there exists a constant $C_{0}$ such that
\[ |p(x)-p(y)| \leq \frac{C_0}{-\log (|x-y|)} \]
for all $x,y \in \Omega$ with $ |x-y|<\frac{1}{2}$.  We say that $\pp$
belongs to the class of exponents that are log-H\"older continuous at
infinity, $L H_{\infty}(\Omega)$, denoted by $p(\cdot) \in L H_{\infty}(\Omega)$, if
there exists a constant $C_{\infty}$ and $p_{\infty}$ such that
$$
\left|p(x)-p_{\infty}\right| \leq \frac{C_{\infty}}{\log (e+|x|)}
$$
for all $x \in \Omega$.  Define $LH(\Omega)=LH_0(\Omega)\cap
LH_\infty(\Omega)$. 

Given $p(\cdot) \in \mathcal{P}(\Omega)$ and a Lebesgue
  measurable function $f$, define the modular functional (or simply
  the modular) associated with $p(\cdot)$ by
$$
\rho_{p(\cdot), \Omega}(f)
=\int_{\Omega\backslash \Omega_{\infty}}|f(x)|^{p(x)} d x
+\|f\|_{L^{\infty}\left(\Omega_{\infty}\right)},
$$
where $\Omega_\infty= \{ x\in \Omega : p(x)=\infty\}$.  If
$\left|\Omega_{\infty}\right|=0$, in particular when $p_{+}<\infty$,
$\|f\|_{L^{\infty}\left(\Omega_{\infty}\right)}=0$; when
$\left|\Omega \backslash \Omega_{\infty}\right|=0$, then
$\rho_{p(\cdot),
  \Omega}(f)=\|f\|_{L^{\infty}\left(\Omega_{\infty}\right)}$. In
situations where there is no ambiguity we will simply write
$\rho_{p(\cdot)}(f)$ or $\rho(f)$.  

Define
  $L^{p(\cdot)}(\Omega)$ to be the set of Lebesgue measurable
  functions $f$ such that $\rho(f / \lambda)<\infty$ for some
  $\lambda>0$. Define $L_{\text {loc }}^{p(\cdot)}(\Omega)$ to be the
  set of measurable functions $f$ such that $f \in L^{p(\cdot)}(K)$
  for every compact set $K \subset \Omega$.
Define 
$$
\|f\|_{L^{p(\cdot)}(\Omega)}
=
\inf \left\{\lambda>0: \rho_{p(\cdot), \Omega}(f / \lambda) \leq 1\right\} .
$$
If the set on the righthand side is empty define
$\|f\|_{L^{p(\cdot)}(\Omega)}=\infty$. If there is no ambiguity over
the domain $\Omega$, we will often write $\|f\|_{p(\cdot)}$ instead of
$\|f\|_{L^{p(\cdot)}(\Omega)}$.  When $\pp$ equals a constant $p$,
then $\Lp(\Omega)$ is equal to the classical Lebesgue space
$L^p(\Omega)$ and $\|f\|_{\Lp(\Omega)}=\|f\|_{L^p(\Omega)}$. 

The functional $\|\cdot\|_{\Lp(\Omega)}$ is a norm, and equipped with
this norm, $\Lp(\Omega)$ is a Banach function space.
Given $\tau\geq
1$, the norm satisfies the rescaling property that for $\tau>1$,
\begin{equation} \label{eqn:rescale}
 \|f\|_{L^{\tau\pp}(\Omega)} = \||f|^\tau\|_{\Lp(\Omega)}^\tau. 
\end{equation} 
It also satisfies a generalized H\"older inequality:  given
$\pp,\,\qq,\,\rr \in \Pp(\Omega)$ such that for all $x\in \Omega$,
\[  \frac{1}{r(x)} = \frac{1}{p(x)}+\frac{1}{q(x)}, \]
then there exists a constant $C$ such that for all $f\in \Lp(\Omega)$ and $g\in \Lq(\Omega)$,
\begin{equation} \label{eqn:holder}
 \|fg\|_{L^\rr(\Omega)}
  \leq
  C\|f\|_{\Lp(\Omega)} \|g\|_{\Lq(\Omega)}. 
\end{equation}

If $\pp \in LH(\Omega)$, then for every cube $Q\subset \Omega$ we can
estimate the norm of the characteristic function of $Q$ in
$\Lp(\Omega)$ by
\begin{equation} \label{eqn:h-mean}
    \|\chi_Q\|_{\Lp(\Omega)} \approx |Q|^{\frac{1}{p_Q}},
\end{equation}
where $p_Q$ is the harmonic mean of $\pp$ on $Q$:
\[  \frac{1}{p_Q} = \avgint_Q \frac{dx}{p(x)}. \]
See~\cite[Section~4.6.2]{CUF13}.

\section{The Stein-Weiss inequality in variable Lebesgue spaces}
\label{section:stein-weiss}

At the heart of the proofs of our main results is an application of
the theory of weighted norm inequalities in the variable Lebesgue
spaces.  We make use of the following relationships between the
gradient, the fractional Laplacian and the Riesz potentials.  Given
$0<\alpha<n$, define the Riesz potential $I_\alpha$ to be the integral operator
\[ I_{\alpha}f(x)
  =
  \int_{\mathbb{R}^{n}}\frac{1}{|x-y|^{n-\alpha}}f(y)dy. \]
It is well-known (see, for instance,~\cite{MR0290095}) that
$I_\alpha : L^p(\R^n)\rightarrow L^q(\R^n)$, whenever
$1<p<\frac{n}{\alpha}$ and
$\frac{1}{p}-\frac{1}{q}=\frac{\alpha}{n}$.  The analogous inequality
holds in the variable Lebesgue spaces:  see~\cite[Chapter~5]{CUF13}. 

The connection between the Riesz potential and the gradient is given
by two inequalities.  The first is
\begin{equation} \label{eqn:rp1}
  |f(x)| \lesssim I_1(|\grad f|)(x), 
\end{equation}
which holds for all $f\in C_0^\infty(\R^n)$.  The second holds given any bounded, convex domain $\Omega$ and $x\in \Omega$:
\begin{equation} \label{eqn:rp1bis}
|f(x)-\langle f \rangle_\Omega|
\leq I_1(|\grad f|\chi_\Omega)(x). 
\end{equation}
(See~\cite{CUF13,MR1814364}.) 
Similarly, by the definition of the
fractional Laplacian and the properties of the Fourier transform, we
have that for $s \in [0,1]$,
\begin{equation} \label{eqn:rp2}
  I_{2s} ((-\Delta)^s f)(x) = f(x).
\end{equation}

We exploit these estimates using the following result, which is a
generalization of the classical Stein-Weiss inequality~\cite{StWe58}
to the variable Lebesgue spaces.

\begin{thm} \label{thm:var-stein-weiss}
Fix $0<\alpha<n$.  Given $\pp,\,\qq \in \Pp(\R^n)$, suppose that
$\pp,\,\qq \in LH(\R^n)$, $1<p_-\leq p_+<\infty$, $1<q_-\leq
q_+<\infty$, and $p(x)\leq q(x)$ for all $x\in \R^n$.  Given constants
$a,\, b$ that satisfy
\begin{equation} \label{eqn:vsw1}
 -\frac{n}{q_+} < a\leq b < \frac{n}{(p_-)'}, 
\end{equation}
suppose further that for all $x\in \R^n$, 
\begin{equation} \label{eqn:vsw2}
  \frac{1}{p(x)}-\frac{1}{q(x)} = \frac{\alpha}{n} +
  \frac{a-b}{n}.
\end{equation}
Then for all $f$ such that $|\cdot|^bf\in \Lp(\R^n)$,
\[ \||\cdot|^a I_\alpha f\|_{\qq} \leq C \||\cdot|^b f\|_{\pp}. \]
\end{thm}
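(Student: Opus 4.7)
The plan is to obtain Theorem~\ref{thm:var-stein-weiss} by appealing to the two-weight variable exponent boundedness theorem of Melchiori and Pradolini for the Riesz potential, applied to the pair of power weights $u(x)=|x|^{a}$ and $v(x)=|x|^{b}$. That result provides a sufficient ``fractional $A_{\pp,\qq}$'' testing condition on a pair $(u,v)$ under which
\[
\|u\,I_\alpha f\|_{\qq}\leq C\,\|v\,f\|_{\pp}
\]
holds, whenever $\pp,\qq\in LH(\R^n)$, $1<p_-\leq p_+<\infty$, $1<q_-\leq q_+<\infty$, and $p(x)\leq q(x)$ -- all of which are in our hypotheses. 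So the task reduces to verifying that the pair $(|\cdot|^{a},|\cdot|^{b})$ satisfies this testing condition, which takes the form
\[
[u,v]:=\sup_{Q}|Q|^{\frac{\alpha}{n}-1}\,\||\cdot|^{a}\chi_{Q}\|_{\qq}\,\||\cdot|^{-b}\chi_{Q}\|_{\cpp}<\infty,
\]
the supremum taken over all cubes $Q\subset\R^n$.

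To evaluate the two norms in the supremum I would use the log-H\"older continuity of $\pp$ and $\qq$ together with \eqref{eqn:h-mean} and the standard refinements of that estimate for power weights (see \cite[Chapter~4]{CUF13}). After splitting $Q$ according to whether $|x|\lesssim \diam(Q)$ or $|x|\gtrsim \diam(Q)$, these yield the asymptotic
\[
\||\cdot|^{s}\chi_{Q}\|_{L^{r(\cdot)}}\ \approx\ |Q|^{\frac{1}{r_{Q}}}\bigl(\max\{|x_{Q}|,\ell(Q)\}\bigr)^{s},
\]
where $x_Q$ is the center and $\ell(Q)$ the sidelength of $Q$, provided the local integrability of $|x|^{sr(x)}$ near $0$ is guaranteed. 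The lower bound $a>-n/q_{+}$ is exactly what is needed to ensure that $\int_{Q}|x|^{aq(x)}\,dx<\infty$ for cubes containing the origin, while $b<n/(p_{-})'$ makes $\int_{Q}|x|^{-bp'(x)}\,dx<\infty$.

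Inserting these estimates, the testing condition reduces to a scaling inequality in $|Q|$ and $|x_Q|$. The compatibility relation $1/p(x)-1/q(x)=\alpha/n+(a-b)/n$, combined with log-H\"older continuity, forces $1/p_Q - 1/q_Q$ to equal $\alpha/n + (a-b)/n$ up to an error controlled by $1/\log|Q|$, and so the powers of $|Q|$ cancel up to a uniformly bounded factor. The assumption $a\leq b$ guarantees that $(\max\{|x_{Q}|,\ell(Q)\})^{a-b}$ remains bounded as $|x_Q|\to\infty$. The principal obstacle will be the case analysis for cubes of three qualitatively different types -- cubes containing or adjacent to the origin, cubes with $|x_{Q}|\approx \ell(Q)$, and cubes far from the origin -- in each of which the binding endpoint of $\pp$ or $\qq$ (whether $p_{-},p_{+},q_{-},$ or $q_{+}$) may change, and one has to track these transitions carefully, invoking the $LH_0$ condition near the origin and the $LH_\infty$ condition far away, to keep the supremum finite.
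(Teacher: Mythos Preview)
Your overall strategy matches the paper's: apply the Melchiori--Pradolini two-weight theorem (Theorem~\ref{thm:mp}) to the pair $(|\cdot|^a,|\cdot|^b)$. However, you have misstated their testing condition. The actual condition~\eqref{eqn:mp1} does not involve the variable-exponent norms $\||\cdot|^{a}\chi_{Q}\|_{\qq}$ and $\||\cdot|^{-b}\chi_{Q}\|_{\cpp}$; the weights appear only in \emph{constant-exponent} bump norms $L^{rq_{+}}$ and $L^{s(p_{-})'}$ for some $r,s>1$, while the variable exponents enter solely through the unweighted ratio $\|\chi_Q\|_{\qq}/\|\chi_Q\|_{\pp}$.

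This is not cosmetic: it makes the verification far simpler than you anticipate. Once $\|\chi_Q\|_{\qq}/\|\chi_Q\|_{\pp}$ is replaced by $|Q|^{1/q_Q-1/p_Q}$ via~\eqref{eqn:h-mean}, the remaining factors are ordinary Lebesgue averages $\big(\avgint_Q |x|^{arq_+}\,dx\big)^{1/rq_+}$ and $\big(\avgint_Q |x|^{-bs(p_-)'}\,dx\big)^{1/s(p_-)'}$, handled by a short elementary lemma (Lemma~\ref{lemma1}): for cubes with $\dist(Q,0)\leq \ell(Q)$ each average is $\lesssim \ell(Q)$, and for cubes with $\dist(Q,0)\geq \ell(Q)$ each is $\lesssim |c_Q|$. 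In both cases, integrating the compatibility relation~\eqref{eqn:vsw2} over $Q$ shows the total exponent is \emph{exactly} zero---no log-error terms, no endpoint tracking, and only two cases rather than three. Your proposed asymptotic $\||\cdot|^{s}\chi_{Q}\|_{L^{r(\cdot)}}\approx |Q|^{1/r_Q}(\max\{|x_Q|,\ell(Q)\})^{s}$ would itself require a nontrivial proof, and the three-regime analysis you foresee is unnecessary.

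Finally, the paper explicitly remarks that the hypothesis $a\leq b$ is \emph{not} used in the verification of~\eqref{eqn:mp1}; it is retained only because the opposite inequality is known to trivialize the problem in the constant-exponent case. Your claim that $a\leq b$ is needed to bound $(\max\{|x_Q|,\ell(Q)\})^{a-b}$ is therefore an artifact of the wrong testing condition.
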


We will prove Theorem~\ref{thm:var-stein-weiss} as a consequence of a
more general, two weight norm inequality in variable Lebesgue spaces
due to Melchiori and Pradolini~\cite[Theorem 1.1]{MP18}.  By a weight
$w$ we mean a non-negative, locally integrable function that satisfies
$0<w(x)<\infty$ almost everywhere.
		
\begin{thm}\label{thm:mp}
  Fix $0<\alpha<n$.  Given $\pp,\,\qq \in \Pp(\R^n)$, suppose that
$\pp,\,\qq \in LH(\R^n)$, $1<p_-\leq p_+<\infty$, $1<q_-\leq
q_+<\infty$, and $p(x)\leq q(x)$ for all $x\in \R^n$.
If for some $r,\,s>1$, the pair of weights $(u, v)$ satisfies

\begin{equation} \label{eqn:mp1}
\sup_Q \;|Q|^{\frac{\alpha}{n}}
\frac{\left\|\chi_Q\right\|_{q(\cdot)}}{\left\|\chi_Q\right\|_{p(\cdot)}}
\frac{\left\|\chi_Q u\right\|_{r q^{+}}}{\left\|
\chi_Q\right\|_{r q^{+}}}
\frac{\left\|\chi_Q  v^{-1}\right\|_{s\left(p^{-}\right)^{\prime}}}
{\left\|\chi_Q\right\|_{s\left(p^{-}\right)^{\prime}}}
<\infty.
\end{equation}
Then for all $f$ such that $vf\in \Lp(\R^n)$,
\[ \left\|u I_\alpha f\right\|_{q(\cdot)} \lesssim \|v
  f\|_{p(\cdot)}. \]
\end{thm}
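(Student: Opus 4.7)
The plan is to derive Theorem~\ref{thm:mp} via sparse domination of the Riesz potential followed by a cube-by-cube application of the generalized H\"older inequality, tuned so that hypothesis~\eqref{eqn:mp1} collapses the weight-only factors into a uniform constant. It is by now well known that there exist finitely many dyadic grids $\D^j$ and $\eta$-sparse subfamilies $\Ss^j\subset \D^j$ such that $I_\alpha f(x) \lesssim \sum_j \sum_{Q\in \Ss^j}|Q|^{\alpha/n}\langle f\rangle_Q \chi_Q(x)$ for $f\geq 0$. So it suffices to bound a single sparse fractional operator $T^\Ss_\alpha f = \sum_{Q\in\Ss}|Q|^{\alpha/n}\langle f\rangle_Q \chi_Q$, where sparseness supplies pairwise disjoint sets $E_Q\subset Q$ with $|E_Q|\geq\eta|Q|$.

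By the norm conjugate formula for variable Lebesgue spaces, I would test $\|u\, T^\Ss_\alpha f\|_\qq$ against nonnegative $g$ with $\|g\|_\cqq\leq 1$. Writing $\langle f\rangle_Q = |Q|^{-1}\int_Q (vf)\cdot v^{-1}$ and applying the generalized H\"older inequality~\eqref{eqn:holder} with the exponent pair $((s(p^-)')',\, s(p^-)')$, and separately applying~\eqref{eqn:holder} to $\int_Q ug$ with $((rq^+)',\, rq^+)$, yields
\[
|Q|^{\alpha/n}\langle f\rangle_Q \int_Q ug\, dx
\;\lesssim\;
\frac{|Q|^{\alpha/n}}{|Q|}\,\|v^{-1}\chi_Q\|_{s(p^-)'}\,\|u\chi_Q\|_{rq^+}
\cdot\|vf\chi_Q\|_{(s(p^-)')'}\,\|g\chi_Q\|_{(rq^+)'}.
\]
Multiplying and dividing by $\|\chi_Q\|_{s(p^-)'}$, $\|\chi_Q\|_{rq^+}$, $\|\chi_Q\|_\pp$, and $\|\chi_Q\|_\qq$, and invoking the characteristic-function norm estimate~\eqref{eqn:h-mean}, the purely weighted pre-factor is bounded by the supremum in~\eqref{eqn:mp1}, a uniform constant.

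What remains is a sparse sum comparable to
\[
\sum_{Q\in\Ss}|E_Q|\,\langle (vf)^{(s(p^-)')'}\rangle_Q^{1/(s(p^-)')'}\,\langle g^{(rq^+)'}\rangle_Q^{1/(rq^+)'}\,\frac{\|\chi_Q\|_\qq\,\|\chi_Q\|_{\cpp}}{|Q|}.
\]
Since $r,s>1$ force $(s(p^-)')'<p_-$ and $(rq^+)'<q_+'$, the two averages are pointwise dominated on $E_Q$ by the shifted maximal functions $M_\sigma h := (M(h^\sigma))^{1/\sigma}$ applied to $vf$ and $g$ respectively. Pairwise disjointness of $\{E_Q\}$ turns the sum into an integral, which I would then bound by the generalized H\"older inequality followed by the boundedness of the Hardy--Littlewood maximal operator on $\Lp(\R^n)$ and $L^\cqq(\R^n)$ --- valid because $\pp,\qq\in LH(\R^n)$ with $p_->1$ and $q_+<\infty$ --- to obtain the final bound by $\|vf\|_\pp\,\|g\|_\cqq$.

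The principal obstacle is the bookkeeping in the second paragraph: one must arrange six different characteristic-function norms so that they collapse exactly into the ratio appearing in~\eqref{eqn:mp1}, which requires careful use of \eqref{eqn:h-mean} and the identities $\left(p'(\cdot)\right)_{\pm}=(p_{\mp})'$. The auxiliary parameters $r,s>1$ are indispensable, since they are precisely what force $(s(p^-)')'<p_-$ and $(rq^+)'<q_+'$, supplying the room needed to apply the maximal function on $\Lp$ and $L^\cqq$ in the variable-exponent setting.
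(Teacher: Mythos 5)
The paper does not actually prove Theorem~\ref{thm:mp}: it is quoted verbatim from Melchiori and Pradolini \cite[Theorem~1.1]{MP18} and used as a black box, so yours is an independent proof attempt rather than a reconstruction of an argument in the paper. Judged on its own terms, the sparse reduction, the duality step, and the cube-wise H\"older estimates with the bumped constant exponents $A=s(p_-)'$, $B=rq_+$ are all sound, and \eqref{eqn:mp1} does collapse the weight factors into a uniform constant. The problem is in what you dismiss as bookkeeping, and in the finish.

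Carrying out your own multiplications and divisions, each cube contributes at most a constant times
\[
|Q|\,\frac{\|\chi_Q\|_\pp}{\|\chi_Q\|_\qq}\,\langle (vf)^{A'}\rangle_Q^{1/A'}\langle g^{B'}\rangle_Q^{1/B'}
\;\approx\;
\|\chi_Q\|_\pp\,\|\chi_Q\|_{\cqq}\,\langle (vf)^{A'}\rangle_Q^{1/A'}\langle g^{B'}\rangle_Q^{1/B'},
\]
so the surviving ratio is $\|\chi_Q\|_\pp/\|\chi_Q\|_\qq$, not the transposed factor $\|\chi_Q\|_\qq\|\chi_Q\|_{\cpp}/|Q|$ that you wrote. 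By \eqref{eqn:h-mean} the correct factor is $\approx|Q|^{1+1/p_Q-1/q_Q}$, which is not comparable to $|E_Q|\approx|Q|$ unless $\pp=\qq$. Consequently the proposed finish fails: disjointness of the sets $E_Q$ would let you dominate $\sum_Q|E_Q|\langle (vf)^{A'}\rangle_Q^{1/A'}\langle g^{B'}\rangle_Q^{1/B'}$ by $\int M_{A'}(vf)\,M_{B'}g\,dx$, but (i) you do not have the factor $|E_Q|$, and (ii) even if you did, the generalized H\"older inequality pairs $\Lp$ with $L^{\cpp}$, not with $L^{\cqq}$, so you would be forced to use an off-diagonal estimate such as $\|M_{B'}g\|_{\cpp}\lesssim\|g\|_{\cqq}$, which is false --- the maximal operator gains no integrability, and this already fails for constant exponents $p<q$. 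The structural point is that the gain $|Q|^{\alpha/n}$ and the ratio $\|\chi_Q\|_\qq/\|\chi_Q\|_\pp$ have been spent entirely inside the bump constant \eqref{eqn:mp1}, so the off-diagonal discrepancy $|Q|^{1/p_Q-1/q_Q}$ must be absorbed by the summation itself. In the constant-exponent case this is done not by a single integral but by splitting the factor as $|Q|^{1/p}\cdot|Q|^{1/q'}$, applying discrete H\"older in $\ell^{q}\times\ell^{q'}$, using the embedding $\ell^{p}\hookrightarrow\ell^{q}$ (here is where $p\le q$ enters), and only then invoking disjointness and the maximal operators $M_{A'}$, $M_{B'}$; your outline contains no variable-exponent substitute for this step, which is the technical heart of the theorem (compare the effort needed for the unweighted Riesz potential bounds in \cite[Chapter~5]{CUF13}). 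In the diagonal case $\pp=\qq$ your argument does close, but the theorem is needed in this paper precisely in the off-diagonal regime, so as it stands the proposal has a genuine gap.
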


To prove Theorem~\ref{thm:var-stein-weiss} we need one lemma.

\begin{lem} \label{lemma1} Fix $-n<\gamma<\infty$. Given any cube
  $Q=Q(l(Q), c_{Q})$, if $\dist(Q,0)\leq \ell(Q)$, then
  $$\bigg(\dashint_{Q}|x|^{\gamma}\,dx \bigg)^{\frac{1}{\gamma}}
  \lesssim \ell (Q).$$
If $\dist(Q,0)\geq \ell(Q)$, then
$$\bigg(\dashint_{Q}|x|^{\gamma}\,dx\bigg)^{\frac{1}{\gamma}}
\lesssim |c_{Q}|.$$
In both cases, the implicit constants depend only on $n$ and $\gamma$.
\end{lem}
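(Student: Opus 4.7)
The plan is to reduce both bounds to a single pointwise estimate of the form $|x|\le CR$ for all $x\in Q$, with $R=\ell(Q)$ in the first case and $R=|c_Q|$ in the second. Once such a bound is established, the monotonicity of $t\mapsto t^{\gamma}$ delivers the conclusion in both sign regimes. Indeed, if $\gamma>0$, then $|x|^\gamma\le C^\gamma R^\gamma$ on $Q$, so $\dashint_Q|x|^\gamma\,dx\le C^\gamma R^\gamma$, and raising to $1/\gamma>0$ preserves the inequality. If $\gamma<0$, the same pointwise bound on $|x|$ flips to $|x|^\gamma\ge C^\gamma R^\gamma$, giving $\dashint_Q|x|^\gamma\,dx\ge C^\gamma R^\gamma$; then raising to $1/\gamma<0$ reverses the inequality, again yielding $\bigl(\dashint_Q|x|^\gamma\,dx\bigr)^{1/\gamma}\lesssim R$.

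The pointwise bound itself is elementary, using only the triangle inequality and the fact that $\diam(Q)=\sqrt n\,\ell(Q)$. In the first case, select $y\in Q$ with $|y|\le 2\dist(Q,0)\le 2\ell(Q)$; then for every $x\in Q$,
\[
  |x|\le |y|+|x-y|\le 2\ell(Q)+\sqrt n\,\ell(Q)\lesssim \ell(Q).
\]
In the second case, since $c_Q\in Q$ we have $|c_Q|\ge \dist(Q,0)\ge \ell(Q)$, which controls the diameter term by $|c_Q|$:
\[
  |x|\le |c_Q|+|x-c_Q|\le |c_Q|+\tfrac{\sqrt n}{2}\,\ell(Q)\le \bigl(1+\tfrac{\sqrt n}{2}\bigr)|c_Q|.
\]
The implicit constants depend only on $n$, as required.

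The hypothesis $\gamma>-n$ plays no direct role in these pointwise estimates. It enters only to guarantee $\int_Q|x|^\gamma\,dx<\infty$ when $Q$ contains the origin in the first case (standard polar-coordinate integrability of $|x|^\gamma$ near $0$); since we only need a lower bound on the integral when $\gamma<0$, the finiteness is really a consistency check rather than a proof obstruction. The only step that genuinely requires care is the sign-of-$\gamma$ bookkeeping when inverting the $\gamma$-power, so I would structure the write-up to separate the reduction step from the geometric estimate for clarity.
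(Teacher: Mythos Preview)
Your argument is correct. The idea of reducing everything to a single pointwise bound $|x|\le C R$ on $Q$ and then splitting on the sign of $\gamma$ when taking the $1/\gamma$ power is clean and handles all cases uniformly.

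The paper proceeds somewhat differently in the first case: rather than using a pointwise bound, it encloses $Q$ in a ball $B(0,r)$ with $r\approx\ell(Q)$ and computes $\dashint_{B}|x|^{\gamma}\,dx$ explicitly in polar coordinates to obtain $\dashint_Q|x|^\gamma\,dx\lesssim\ell(Q)^\gamma$. In the second case the paper does essentially what you do, establishing the two-sided pointwise estimate $|x|\approx |c_Q|$ on $Q$. Your route is more elementary (no integration is needed at all), and it has the advantage of making the $\gamma<0$ case completely transparent: the paper's polar-coordinate step in Case~1 is written as an upper bound on the average, which literally yields $(\dashint_Q|x|^\gamma\,dx)^{1/\gamma}\lesssim\ell(Q)$ only when $\gamma>0$; your sign bookkeeping avoids that issue entirely.

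One small cosmetic point: in the first case your choice ``select $y\in Q$ with $|y|\le 2\dist(Q,0)$'' is awkward when $\dist(Q,0)=0$. It is simpler to observe directly that $Q\subset B\bigl(0,\dist(Q,0)+\diam(Q)\bigr)\subset B\bigl(0,(1+\sqrt n)\ell(Q)\bigr)$, which gives $|x|\le(1+\sqrt n)\ell(Q)$ for all $x\in Q$ without selecting an auxiliary point.
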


\begin{proof}
  First assume that $\dist(Q,0)\leq \ell(Q)$. In this case there
  exists a ball $B_{Q}=B(0,r)$, where $r\leq (\sqrt{n}+1)\,l(Q)$ and
  $Q<B_{Q}.$
Therefore, if we shift to polar coordinates,
\begin{align*}
  \dashint_{Q}|x|^{\gamma}\,dx
  & \leq c(n) \dashint_{B_{Q}}|x|^{\gamma}\,dx  
\\ &  \approx \ell(Q)^{-n} \int_{0}^{(\sqrt{n}+1)\,\ell(Q)} \int_{S^{n-1}}r^{\gamma} r^{n-1}\,d\theta dr  
\\ &  \approx \ell(Q)^{-n} \int_{0}^{(\sqrt{n}+1)\,\ell(Q)} r^{\gamma+n-1} \,dr  
  \\ &  \approx \ell(Q)^{-n} \ell(Q)^{\gamma+n}
       \\& =\ell(Q)^{\gamma}.  
\end{align*}

Now suppose that $\text{dist} (Q,0)\geq \ell(Q)$. Let $x_{0}\in Q$ be
such that $|x_{0}|=\text{dist} (Q,0).$ Given any point $x\in Q$, we
have
\[ |x_{0}| \leq |x|  \leq |x_{0}|+|x-x_{0}|  
\leq  |x_{0}|+\sqrt{n}\,\ell(Q)\leq (\sqrt{n}+1) |x_{0}|.   \]
A similar computation shows $|x_0|\approx |c_{Q}|$. Therefore, we have that
$$\dashint_{Q} |x|^{\gamma}\,dx \lesssim |c_{Q}|^{\gamma}.$$
\end{proof}

\begin{proof}[Proof of Theorem~\ref{thm:var-stein-weiss}]
We need to show that if we let  $u(x)=|x|^{a}$ and $v(x)=|x|^{b}$,
then with our hypotheses on $a,\,b$ and $\pp,\,\qq$, the
condition~\eqref{eqn:mp1} holds.  Our first step is to replace this
condition with a simpler one.  Since $\pp,\qq\in
LH(\R^n)$, if we apply~\eqref{eqn:h-mean} to the norm of each
characteristic function, and rewrite the norms of the weights in terms
of Lebesgue spaces norms, we get that it is equivalent to assuming
that for some $r,\,s>1$,
\begin{equation} \label{eqn:mp2}
  \sup_Q |Q|^{\frac{\alpha}{n}+\frac{1}{q_{Q}}-\frac{1}{p_{Q}}}
  \left(\dashint_{Q} |x|^{arq_{+}} \,dx \right)^{\frac{1}{rq_{+}}}
\left(\dashint_{Q} |x|^{-bs(p_{-})^{\prime}} \,dx
\right)^{\frac{1}{s(p_{-})^{\prime}}}
< \infty. 
\end{equation}
(We note in passing that this condition should be compared to the
result of Perez \cite{Perez94} in the constant exponent case.)

Fix a cube $Q$.   We first choose $r,\,s>1$ such that
\[ \int_{Q} |x|^{arq_{+}} \,dx <  \infty,
  \quad
\int_{Q} |x|^{-bs(p_{-})^{\prime}} \,dx < \infty.  \]
This is possible by assumption~\eqref{eqn:vsw1}.  

Now suppose first that
$\dist (Q,0)\leq \ell(Q)$.
Then by Lemma \ref{lemma1}, the lefthand side of  \eqref{eqn:mp2} is
bounded above by.
\begin{equation}\label{cond3}
|Q|^{\frac{\alpha}{n}+\frac{1}{q_{Q}}-\frac{1}{p_{Q}}}|Q|^{\frac{a}{n}}|Q|^{-\frac{b}{n}}=
|Q|^{\frac{\alpha}{n}+\frac{1}{q_{Q}}-\frac{1}{p_{Q}}+\frac{a-b}{n}}
\end{equation}
However, if we take the integral average of \eqref{eqn:vsw2} over the cube $Q$, we get
that
\[ \frac{\alpha}{n}+\frac{1}{q_{Q}}-\frac{1}{p_{Q}}+\frac{a-b}{n}  =
  0. \]
Therefore, for all such cubes, the  lefthand side of  \eqref{eqn:mp2}
is uniformly bounded.

Now suppose that  $\dist (Q,0)\geq \ell(Q)$.  Then the lefthand
side of~\eqref{eqn:mp2} is bounded above by 
\[
|c_{Q}|^{\alpha+\frac{n}{q_{Q}}-\frac{n}{p_{Q}}}|c_{Q}|^{a}|c_{Q}|^{-b}=
|c_{Q}|^{\alpha+\frac{n}{q_{Q}}-\frac{n}{p_{Q}}+a-b}.
\]
But arguing as above we must have that the exponent on the righthand
side is equal to $0$, and so again lefthand side of  \eqref{eqn:mp2}
is uniformly bounded for all such cubes.  Therefore, \eqref{eqn:mp2}
holds, and so by Theorem~\ref{thm:mp} we get the desired inequality
for $I_\alpha$.
\end{proof}

\medskip

\begin{remark}
 In the proof of Theorem~\ref{thm:var-stein-weiss} we do not actually
 use the hypothesis $a\leq b$.  However, if the opposite inequality
 holds, then~\eqref{eqn:vsw2} implies that
 \[ \frac{1}{p(x)}-\frac{1}{q(x)} > \frac{\alpha}{n}. \]
 In the constant exponent case, it is known that in this case the
 weighted norm inequality only holds in the trivial case when $u(x)=0$
 whenever $v(x)<\infty$ (see the remarks after \cite[Theorem
 1]{Saw82}).
\end{remark}

\section{Proofs of the main results}
\label{section:proofs}

We can now prove our main results.  As we noted in the Introduction,
Theorem~\ref{thm1} is a special case of Theorem~\ref{Hardy-Sob}.

\begin{proof}[Proof of Theorem~\ref{Hardy-Sob}]
  Fix $f\in C_c^\infty(\R^n)$.  Then
  \[ \| |\cdot|^a f\|_\qq
    \leq
    C  \| |\cdot|^a I_1(|\grad f|)\|_\qq
    \leq
    C \| |\cdot|^b \grad f\|_\pp; \]
  the first inequality follows from~\eqref{eqn:rp1} and the second
  from our hypotheses and Theorem~\ref{thm:var-stein-weiss} with
  $\alpha=1$. 
\end{proof}

\begin{proof}[Proof of Theorem~\ref{thm:wGN}]
  Fix $f\in C_c^\infty(\R^n)$.  Then by H\"older's
  inequality~\eqref{eqn:holder} and the rescaling
  property~\eqref{eqn:rescale},
  \[ \||\cdot|^a f\|_\rr
    \leq C\||\cdot|^a  f\|_\ps^{\theta}
    \||\cdot|^a f\|_\qq^{1-\theta}.  \]
  By inequality~\eqref{eqn:rp1} and by
  Theorem~\ref{thm:var-stein-weiss} with $\alpha=1$, $a=b$ and $\qq=\ps$, which we can apply because of our
  hypotheses, 
  \[ \||\cdot|^a  f\|_\ps
    \leq
    C\||\cdot|^a  I_1(|\grad f)\|_\ps
    \leq
    C\| |\cdot|^a \grad f \|_\pp. \]
  If we combine these two estimates we get the desired result.
\end{proof}

\medskip

\begin{proof}[Proof of Theorem~\ref{thm:poincare}]
The proof is nearly the same as the proof of Theorem~\ref{Hardy-Sob}.  As we noted  in Remark~\ref{remark:LH-omega}, we may assume without loss of generality that $\pp,\, \qq \in LH(\R^n)$.  Define $\rr\in LH(\R^n)$ by
\[ \frac{1}{p(x)} - \frac{1}{r(x)}
= \frac{1}{n} + \frac{a-b}{n}. \]
Then $q(x)\leq r(x)$, so we can define the exponent $\sst$ by 
\[ \frac{1}{q(x)}=\frac{1}{s(x)}+\frac{1}{r(x)}.  \]
By the generalized H\"older inequality,
\begin{align*}
\left\||\cdot|^{a}[f-\langle f\rangle_\Omega ]\right\|_{L^{q(\cdot)}(\Omega)} 
& \leq 
\left\||\cdot|^{a}[f-\langle f\rangle_\Omega ]\right\|_{L^{r(\cdot)}(\Omega)}
\|\chi_\Omega\|_{L^{s(\cdot)}(\Omega)} \\
& = C(\Omega, \sst) 
\left\||\cdot|^{a}[f-\langle f\rangle_\Omega ]\right\|_{L^{r(\cdot)}(\Omega)}. 
\end{align*}
By inequality~\eqref{eqn:rp1bis} and Theorem~\ref{thm:var-stein-weiss}, 
\begin{equation*}
\left\||\cdot|^{a}[f-\langle f\rangle_\Omega ]\right\|_{L^{r(\cdot)}(\Omega)}
 \leq \left\||\cdot|^{a}I_1(|\grad f|\chi_\Omega) ]\right\|_{L^{r(\cdot)}(\Omega)} 
\leq C\| |\cdot|^b \grad f \|_{L^{p(\cdot)}(\Omega)}.
\end{equation*} 
  If we combine these inequalities we get the desired result.
\end{proof}
\medskip

As we noted in the Introduction, Theorem~\ref{frac-Hardy-Leray} is a
special case of Theorem~\ref{frac-Hardy-Sob} with $a=2s$, $b=0$, and
$\pp=\qq$.

\begin{proof}[Proof of Theorem~\ref{frac-Hardy-Sob}]
  The proof is essentially the same as the proof of
  Theorem~\ref{Hardy-Sob}, but we use~\eqref{eqn:rp2} instead of
  \eqref{eqn:rp1}, and apply Theorem~\ref{thm:var-stein-weiss} with
  $\alpha=2s$.
\end{proof}

\begin{proof}[Proof of Theorem~
\ref{thm:frac-GNi}]
The proof is essentially the same as the proof of
Corollary~\ref{thm:wGN} but again using~\eqref{eqn:rp2} and
Theorem~\ref{thm:var-stein-weiss} with $\alpha=2s$, $a=b$ and
$\qq=p_s(\cdot)$. 
\end{proof}

\section{Solutions to a degenerate $\pp$-Laplacian}
\label{section:pde-app}

In this section we give an application of Theorem~\ref{thm:poincare} to show the existence of solutions to a degenerate Neumann-type problem studied in~\cite{MR4332462}.  Let $\Omega$ be a bounded, open domain in $\R^n$, and let $Q$ be an $n\times n$ self-adjoint, positive-semi-definite matrix function defined on an open neighborhood of $\bar{\Omega}$, and let $v$ be a non-negative, locally integrable function also defined on an open neighborhood of $\bar{\Omega}$.  We are interested in the existence of weak solutions to the problem

\begin{equation}\label{nprob}
\begin{cases}
\Div\Big(\Big|\sqrt{Q}\nabla u\Big|^{\pp-2} Q\nabla u\Big) 
& = |f|^{\pp-2}fv^{\pp} \text{ \; in }\Omega\\
{\bf n}^T \cdot Q \nabla u &= 0\text{ \; on }\partial \Omega,
\end{cases}
\end{equation}
where ${\bf n}$ is the outward unit normal vector of
$\partial \Omega$.  The precise definition of a weak solution is somewhat technical, and we refer the reader to~\cite{MR4332462} for complete details.  (We note in passing that this definition extends to the case when $\Omega$ has a rough boundary and the unit normal is not well-defined.)

In~\cite{MR4332462} the authors showed that if $1<p_-\leq p_+ < \infty$, $v,\|\sqrt{Q(\cdot)}|_{op} \in \Lp(\Omega)$, where for each $x\in \Omega$ $|\sqrt{Q(x)}|_{op}$ is the operator norm of the matrix $\sqrt{Q(x)}$, then the existence of a solution to~\eqref{nprob} was equivalent to the existence of a degenerate Poincar\'e inequality of the form
\begin{equation} \label{eqn:degen-poincare}
\| v[ f-\langle f \rangle_{\Omega, v}] \|_{\Lp(\Omega)}
\leq
C \| \sqrt{Q}\grad f\|_{\Lp(\Omega)},
\end{equation}
where 
\[ \langle f \rangle_{\Omega, v}
= \frac{1}{v(\Omega)}\int_\Omega f(x)v(x)\,dx. \]
By using this result and Theorem~\ref{thm:poincare} we can show that solutions exist when $v$ is a power weight and the smallest eigenvalue of $Q$ is bounded below by a power weight.

\begin{thm} \label{thm:var-neumann}
Fix an exponent $\pp \in \Pp(\Omega)$  such that $1<p_-\leq p_+<\infty$ and  $\pp \in LH(\Omega)$.  Let $a,\,b \in \R$ be such that 
\[ -\frac{n}{p_+} < a\leq b < \frac{n}{(p_-)'} \quad \text{and} \quad
\frac{1}{n}+\frac{a-b}{n}\geq 0. \]
Let $Q$ be a self-adjoint, positive semi-definite matrix function such that $|\sqrt{Q}|_{\op} \in \Lp(\Omega)$, and for all $x\in \Omega$ and $\xi \in \R^n$,
\[ c|x|^b|\xi|\leq |\sqrt{Q(x)}\xi|. \]
Let $v(x)=|x|^a$.  Then there exists a solution to the Neumann-type problem~\eqref{nprob}.
\end{thm}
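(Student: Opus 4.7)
The plan is to invoke the equivalence quoted from \cite{MR4332462}: the existence of a weak solution to \eqref{nprob} follows once we verify the two membership conditions $v, |\sqrt{Q(\cdot)}|_\op \in \Lp(\Omega)$ and establish the degenerate Poincar\'e inequality \eqref{eqn:degen-poincare} for $f \in C^\infty(\Omega)$. The hypothesis on $Q$ gives $|\sqrt{Q}|_\op \in \Lp(\Omega)$ directly, and $v=|\cdot|^a \in \Lp(\Omega)$ follows since $\Omega$ is bounded and the condition $a>-n/p_+$ forces $a\,p(x) \geq a\,p_+ > -n$ in the nontrivial case $a<0$ (the case $a\geq 0$ is immediate). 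Everything then reduces to proving \eqref{eqn:degen-poincare}.

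The core of the argument is to chain three estimates. First, I would replace the $v$-weighted average $\langle f\rangle_{\Omega,v}$ by the ordinary average $\langle f\rangle_\Omega$. By the triangle inequality, for any constant $c$,
\[
\|v[f-\langle f\rangle_{\Omega,v}]\|_{\pp} \leq \|v(f-c)\|_{\pp} + |c-\langle f\rangle_{\Omega,v}|\,\|v\|_{\pp}.
\]
Writing $c - \langle f\rangle_{\Omega,v} = \langle c - f \rangle_{\Omega,v}$ and using the generalized H\"older inequality \eqref{eqn:holder} with exponents $\pp, \pp'$ to bound $\int_\Omega |c-f|v\,dx$ by $C\|v(c-f)\|_{\pp}\|\chi_\Omega\|_{\pp'}$, I get
\[
\|v[f-\langle f\rangle_{\Omega,v}]\|_{\pp} \leq \Big(1 + \tfrac{C\|\chi_\Omega\|_{\pp'}\|v\|_{\pp}}{v(\Omega)}\Big)\,\|v(f-c)\|_{\pp},
\]
and the prefactor is finite since $v>0$ a.e.\ on $\Omega$. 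Choosing $c = \langle f\rangle_\Omega$ converts the right-hand side into $C\||\cdot|^a[f-\langle f\rangle_\Omega]\|_{\pp}$.

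Second, I apply Theorem~\ref{thm:poincare} with $\qq = \pp$. The required hypotheses $-n/p_+ < a \leq b < n/(p_-)'$ and $\tfrac{1}{p(x)}-\tfrac{1}{p(x)} = 0 \leq \tfrac{1}{n}+\tfrac{a-b}{n}$ are exactly the assumptions of Theorem~\ref{thm:var-neumann}, yielding
\[
\||\cdot|^a[f-\langle f\rangle_\Omega]\|_{\pp} \leq C\,\||\cdot|^b\grad f\|_{\pp}.
\]
Third, the pointwise lower bound $c|x|^b|\xi| \leq |\sqrt{Q(x)}\xi|$, applied with $\xi = \grad f(x)$, gives $\||\cdot|^b\grad f\|_\pp \leq c^{-1}\|\sqrt{Q}\grad f\|_\pp$. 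Concatenating the three inequalities produces \eqref{eqn:degen-poincare}, completing the argument.

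The step I expect to require the most care is the first one: passing from the $v$-weighted average to the unweighted one while keeping the constant under control. Each of the quantities $v(\Omega)$, $\|v\|_\pp$, and $\|\chi_\Omega\|_{\pp'}$ must be verified finite and positive, and this is where the quantitative assumption $a > -n/p_+$ (rather than $a > -n/(p_+)'$ or similar) is used. A minor technical point is that Theorem~\ref{thm:poincare} is stated for convex $\Omega$, so I would either assume convexity here or (as in Remark~\ref{remark:LH-omega}) note that the log-H\"older hypothesis extends to $\R^n$ and the argument localizes through the Riesz-potential representation \eqref{eqn:rp1bis}, which is the only place geometry enters.
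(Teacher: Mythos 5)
Your proposal is correct and follows essentially the same route as the paper: reduce via the equivalence from \cite{MR4332462} to verifying $v\in\Lp(\Omega)$ and the degenerate Poincar\'e inequality \eqref{eqn:degen-poincare}, swap the $v$-weighted average for the plain average $\langle f\rangle_\Omega$, apply Theorem~\ref{thm:poincare} with $\qq=\pp$, and conclude with the pointwise eigenvalue bound on $\sqrt{Q}$. Your handling of the average-swapping step (keeping the weight $v$ inside the generalized H\"older estimate) is if anything more careful than the paper's displayed inequality, and your remark that Theorem~\ref{thm:poincare} requires convexity of $\Omega$ flags a hypothesis the paper uses implicitly.
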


\begin{proof}
By~\cite[Theorem~1.8]{MR4332462} the existence of solutions follows immediately if we have that $v\in \Lp(\Omega)$ and the Poincar\'e inequality~\eqref{eqn:degen-poincare} holds.  The first is immediate:  since $\Omega$ is bounded, by~\cite[Corollary~2.48]{CUF13},
\[ \||\cdot|^a\|_{\Lp(\Omega)}
\leq (1+|\Omega|)
\| |\cdot|^a\|_{L^{p_+}(\Omega)}<\infty; \]
the final inequality follows from our assumptions on $a$.  To prove the Poincar\'e inequality, note first that by H\"older's inequality in the variable Lebesgue spaces, 
\[ | \langle f \rangle_{\Omega} 
- \langle f \rangle_{\Omega,v}|
=
\bigg| \frac{1}{v(\Omega)}
\int_\Omega 
[\langle f \rangle_{\Omega} -f(x)\ v(x)\,dx \bigg|
\leq \frac{\|\chi_\Omega|_{L^\cpp(\Omega)}}{v(\Omega)}
\|f- \langle f \rangle_{\Omega} \|_{\Lp(\Omega)}. 
\]
Therefore, by Theorem~\ref{thm:poincare} and our assumptions on $Q$,
\begin{align*}
\| |\cdot|^a[ f-\langle f \rangle_{\Omega, v}] \|_{\Lp(\Omega)}
& \leq C
\| |\cdot|^a[ f-\langle f \rangle_{\Omega}] \|_{\Lp(\Omega)} \\
& \leq C\||\cdot|^b \grad f\|_{\Lp(\Omega)} \\
& \leq C \| \sqrt{Q}\grad f\|_{\Lp(\Omega)}.
\end{align*}
\end{proof}

\medskip

It is straightforward to construct nontrivial examples of exponents $\pp$ and matrices $Q$ for which solutions to~\eqref{nprob} exist.  For instance, in $\R^3$ let $\Omega=B(0,1)$, $a=-3/4$, and $b=1/4$.  Let $\pp\in \Pp(B(0,1))$ be any log-H\"older continuous function such that $p_-=2$ and $p_+=3$.  If we define
\[ Q(x) = \begin{pmatrix}
|x|^{-\frac{3}{4}} & 0 & 0 \\
0 & |x|^{-\frac{1}{4}} & 0 \\
0 & 0 & |x|^{\frac{1}{4}}
\end{pmatrix}, \]
then it is immediate that the hypotheses of Theorem~\ref{thm:var-neumann} are satisfied.

\bibliographystyle{plain}
\bibliography{hardy}

\begin{thebibliography}{10}

\bibitem{MR3626031}
B.~Abdellaoui and R.~Bentifour.
\newblock Caffarelli-{K}ohn-{N}irenberg type inequalities of fractional order
  with applications.
\newblock {\em J. Funct. Anal.}, 272(10):3998--4029, 2017.

\bibitem{MR768824}
L.~Caffarelli, R.~Kohn, and L.~Nirenberg.
\newblock First order interpolation inequalities with weights.
\newblock {\em Compositio Math.}, 53(3):259--275, 1984.

\bibitem{CUF13}
D.~Cruz-Uribe and A.~Fiorenza.
\newblock {\em Variable {L}ebesgue spaces}.
\newblock Applied and Numerical Harmonic Analysis. Birkh\"{a}user/Springer,
  Heidelberg, 2013.
\newblock Foundations and harmonic analysis.

\bibitem{MR4332462}
D.~Cruz-Uribe, M.~Penrod, and S.~Rodney.
\newblock Poincar\'{e} inequalities and {N}eumann problems for the variable
  exponent setting.
\newblock {\em Math. Eng.}, 4(5):Paper No. 036, 22, 2022.

\bibitem{deNitti-Djitte}
N.~de~Nitti and S.~M. Djitte.
\newblock Fractional hardy-rellich inequalities via a pohozaev identity.
\newblock {\em preprint}, 2022.

\bibitem{MR2944369}
E.~Di~Nezza, G.~Palatucci, and E.~Valdinoci.
\newblock Hitchhiker's guide to the fractional {S}obolev spaces.
\newblock {\em Bull. Sci. Math.}, 136(5):521--573, 2012.

\bibitem{diening-harjulehto-hasto-ruzicka2010}
L.~Diening, P.~Harjulehto, P.~H\"ast\"o, and M.~R{\r{u}}{\v{z}}i{\v{c}}ka.
\newblock {\em Lebesgue and {S}obolev spaces with {V}ariable {E}xponents},
  volume 2017 of {\em Lecture Notes in Mathematics}.
\newblock Springer, Heidelberg, 2011.

\bibitem{DieSam07-ext1D}
L.~Diening and S.~Samko.
\newblock Hardy inequality in variable exponent {L}ebesgue spaces.
\newblock {\em Fract. Calc. Appl. Anal.}, 10(1):1--18, 2007.

\bibitem{MR3308513}
D.~E. Edmunds, J.~Lang, and O.~M\'{e}ndez.
\newblock {\em Differential operators on spaces of variable integrability}.
\newblock World Scientific Publishing Co. Pte. Ltd., Hackensack, NJ, 2014.

\bibitem{MR2469027}
R.~L. Frank and R.~Seiringer.
\newblock Non-linear ground state representations and sharp {H}ardy
  inequalities.
\newblock {\em J. Funct. Anal.}, 255(12):3407--3430, 2008.

\bibitem{MR1814364}
D.~Gilbarg and N.~S. Trudinger.
\newblock {\em Elliptic {P}artial {D}ifferential {E}quations of {S}econd
  {O}rder}.
\newblock Classics in Mathematics. Springer-Verlag, Berlin, 2001.
\newblock Reprint of the 1998 edition.

\bibitem{zbMATH02591430}
G.~H. Hardy.
\newblock Notes on some points in the integral calculus. {LX}.
\newblock {\em Messenger of Math.}, 54:150--156, 1925.

\bibitem{HHK05}
P.~Harjulehto, P.~H\"{a}st\"{o}, and M.~Koskenoja.
\newblock Hardy's inequality in a variable exponent {S}obolev space.
\newblock {\em Georgian Math. J.}, 12(3):431--442, 2005.

\bibitem{MR2639204}
P.~Harjulehto, P.~H\"{a}st\"{o}, \'{U}t~V. L\^{e}, and M.~Nuortio.
\newblock Overview of differential equations with non-standard growth.
\newblock {\em Nonlinear Anal.}, 72(12):4551--4574, 2010.

\bibitem{MR436854}
I.~W. Herbst.
\newblock Spectral theory of the operator {$(p^{2}+m^{2})^{1/2}-Ze^{2}/r$}.
\newblock {\em Comm. Math. Phys.}, 53(3):285--294, 1977.

\bibitem{MR3585054}
K.~Ho and I.~Sim.
\newblock Existence results for degenerate {$p(x)$}-{L}aplace equations with
  {L}eray-{L}ions type operators.
\newblock {\em Sci. China Math.}, 60(1):133--146, 2017.

\bibitem{MR2670139}
Y.-H. Kim, L.~Wang, and C.~Zhang.
\newblock Global bifurcation for a class of degenerate elliptic equations with
  variable exponents.
\newblock {\em J. Math. Anal. Appl.}, 371(2):624--637, 2010.

\bibitem{KokSam04-1D}
V.~Kokilashvili and S.~Samko.
\newblock Maximal and fractional operators in weighted {$L^{p(x)}$} spaces.
\newblock {\em Rev. Mat. Iberoamericana}, 20(2):493--515, 2004.

\bibitem{MR3974098}
L.~Kong.
\newblock A degenerate elliptic system with variable exponents.
\newblock {\em Sci. China Math.}, 62(7):1373--1390, 2019.

\bibitem{MR1555394}
J.~Leray.
\newblock Sur le mouvement d'un liquide visqueux emplissant l'espace.
\newblock {\em Acta Math.}, 63(1):193--248, 1934.

\bibitem{MCMO07}
R.~A. Mashiyev, B.~\c{C}eki\c{c}, F.~I. Mamedov, and S.~Ogras.
\newblock Hardy's inequality in power-type weighted {$L^{p(\cdot)}(0,\infty)$}
  spaces.
\newblock {\em J. Math. Anal. Appl.}, 334(1):289--298, 2007.

\bibitem{MP18}
L.~Melchiori and G.~Pradolini.
\newblock Potential operators and their commutators acting between variable
  {L}ebesgue spaces with different weights.
\newblock {\em Integral Transforms Spec. Funct.}, 29(11):909--926, 2018.

\bibitem{MR2291779}
G.~Mingione.
\newblock Regularity of minima: an invitation to the dark side of the calculus
  of variations.
\newblock {\em Appl. Math.}, 51(4):355--426, 2006.

\bibitem{Perez94}
C.~P\'{e}rez.
\newblock Two weighted inequalities for potential and fractional type maximal
  operators.
\newblock {\em Indiana Univ. Math. J.}, 43(2):663--683, 1994.

\bibitem{RS21}
J.~E. Restrepo and D.~Suragan.
\newblock Hardy type inequalities in generalized grand {L}ebesgue spaces.
\newblock {\em Adv. Oper. Theory}, 6(2):Paper No. 30, 31, 2021.

\bibitem{MR3379920}
V.~R\u{a}dulescu and D.~Repov\v{s}.
\newblock {\em Partial differential equations with variable exponents}.
\newblock Monographs and Research Notes in Mathematics. CRC Press, Boca Raton,
  FL, 2015.
\newblock Variational methods and qualitative analysis.

\bibitem{RSY17}
M.~Ruzhansky, D.~Suragan, and N.~Yessirkegenov.
\newblock Extended {C}affarelli-{K}ohn-{N}irenberg inequalities and
  superweights for {L}p-weighted {H}ardy inequalities.
\newblock {\em C. R. Acad. Sci. Paris}, 355:694--698, 2017.

\bibitem{RSY18}
M.~Ruzhansky, D.~Suragan, and N.~Yessirkegenov.
\newblock Extended {C}affarelli-{K}ohn-{N}irenberg inequalities, and
  remainders, stability and superweights for {L}p-weighted {H}ardy
  inequalities.
\newblock {\em Trans. Amer. Math. Soc. Ser. B}, 5:32--62, 2018.

\bibitem{Saw82}
E.~T. Sawyer.
\newblock Two weight norm inequalities for certain maximal and integral
  operators.
\newblock In {\em Harmonic analysis ({M}inneapolis, {M}inn., 1981)}, volume 908
  of {\em Lecture Notes in Math.}, pages 102--127. Springer, Berlin-New York,
  1982.

\bibitem{MR0290095}
E.~M. Stein.
\newblock {\em Singular Integrals and Differentiability Properties of
  Functions}.
\newblock Princeton Mathematical Series, No. 30. Princeton University Press,
  Princeton, N.J., 1970.

\bibitem{StWe58}
E.~M. Stein and G.~Weiss.
\newblock Fractional integrals on {$n$}-dimensional {E}uclidean space.
\newblock {\em J. Math. Mech.}, 7:503--514, 1958.

\end{thebibliography}

\end{document}